%REVISED VERSION: 4 January 2023

\documentclass[12pt, english, a4paper]{amsart}

\usepackage[T1]{fontenc}
\usepackage[utf8]{inputenc}
\usepackage[english]{babel}

\usepackage{accents}
\usepackage{caption}
\usepackage{comment}
\usepackage{enumerate}
\usepackage[hmargin=2.5cm,vmargin=3.2cm]{geometry}
\usepackage{graphics}
\usepackage[colorlinks]{hyperref}
\hypersetup{allcolors = black}
\usepackage{url}

\usepackage{amsmath}
\usepackage{amssymb}
\usepackage{amsthm}
\usepackage{fourier}
\usepackage{mathtools}
\usepackage{nicematrix}

\usepackage{commath}

\theoremstyle{plain}
\newtheorem{theorem}[]{Theorem}[section]

\newtheorem{lemma}[theorem]{Lemma}
\newtheorem{corollary}[theorem]{Corollary}
\newtheorem{conjecture}[theorem]{Conjecture}
\newtheorem{proposition}[theorem]{Proposition}

\numberwithin{equation}{section}

\newcommand{\R}{\mathbb{R}}
\newcommand{\Pro}{\mathbb{P}}
\newcommand{\dd}[1]{\,\mathrm{d}#1}
\newcommand{\mbf}[1]{\mathbf{#1}}
\newcommand{\one}[1]{\mathbf{1}_{#1}}
\newcommand{\zero}[1]{\mathbf{0}_{#1}}
\newcommand{\Proj}[2]{{#2}|_{#1}}
\newcommand{\vol}[2]{\mathrm{Vol}_{#1}\left(#2\right)}

\newcommand{\vv}{\mathbf{v}}

\newcommand{\eps}{\varepsilon}

\DeclareMathOperator{\sinc}{sinc}

\DeclarePairedDelimiterX\sca[2]{\langle}{\rangle}{#1,#2}
\makeatletter
\let\oldsca\sca
\def\sca{\@ifstar{\oldsca}{\oldsca*}}
\makeatother

\subjclass[2020]{52A40, 52A38,49Q20, 05A20}

\keywords{Cube sections, extremal sections, Eulerian numbers}

\begin{document}
\title{Non-diagonal critical central sections of the cube}
\author{Gergely Ambrus \and Barnabás Gárgyán}
\thanks{
Research of G. A. was partially supported by ERC Advanced Grant "GeoScape" no.  882971,  by the Hungarian National Research grants no. NKFIH KKP-133819 and no. NKFIH K-147145,  and by project no. TKP2021-NVA-09, which has been implemented with the support provided by the
Ministry of Innovation and Technology of Hungary from the National
Research, Development and Innovation Fund, financed under the
TKP2021-NVA funding scheme.}
\date{\today}

\maketitle

\begin{abstract}
We study the $(n-1)$-dimensional volume of central hyperplane sections of the $n$-dimensional cube $Q_n$. Our main goal is two-fold: first, we provide an alternative, simpler argument for proving that the volume of the section perpendicular to the main diagonal of the cube is strictly locally maximal for every $n \geq 4$, which was shown before by L. Pournin~\cite{Pournin-local}. Then, we prove that  non-diagonal critical central sections of $Q_n$ exist in all dimensions at least $4$. The crux of both proofs is an estimate on the rate of decay  of the Laplace-Pólya integral $J_n(r) = \int_{-\infty}^\infty \sinc^n t \cdot \cos (rt) \dd t$ that is achieved by combinatorial means. This also yields improved bounds  for Eulerian numbers of the first kind. 
\end{abstract}

\section{Introduction} \label{sec_intro}
Let $Q_n = \big[-\frac 1 2 , \frac 1 2 \big]^n$ denote the centered $n$-dimensional unit cube.  This paper is devoted to the study of the $(n-1)$-dimensional volume of central hyperplane sections of $Q_n$ (that is, sections of the form $Q_n \cap \vv^\perp$). Accordingly, for a given non-zero vector $\vv \in \R^n$, we introduce the {\em central section function}
	\begin{equation}\label{eq:vn}
		\sigma(\mbf{v})=\vol{n-1}{Q_n\cap\mbf{v}^\perp}.
	\end{equation}
Note that the  quantity above is invariant under scalings of $\vv$ by a non-zero factor, and by embeddings of $\vv$ into $\R^m$ with $m \geq n$ and replacing $Q_n$ by $Q_m$.

The function $\sigma(\mbf{v})$ has been studied intensively in the last 50 years. According to a natural conjecture, which had been popularized by Good \cite{Ball-Good}, minimal central sections are parallel to a facet of $Q_n$. This was proved by Hadwiger~\cite{Hadwiger} in 1972, while  Hensley~\cite{Hensley} gave an alternative proof a few years later. He also provided an upper bound on the volume of central hyperplane sections. Completing the characterization of global extrema, Ball~\cite{Ball86} proved that the maximal central sections are orthogonal to the main diagonal of a 2-dimensional face of~$Q_n$.

Our main goal is to study critical points of the functional $\sigma(\mbf v)$ on the unit sphere $S^{n-1}$ -- these will be referred to as {\em critical directions}, and the corresponding sections as {\em critical sections}. The latter were recently characterized by Ivanov and Tsiutsiurupa~\cite{Ivanov} and, independently, the first named author~\cite{Ambrus}. 
{\em Locally extremal} sections are also defined via the analogous property of their unit normal on $S^{n-1}$.

A central section is called {\em $k$-diagonal} if its normal vector is parallel to the main diagonal of a $k$-dimensional face of $Q_n$, where $ 1 \leq k \leq n$. Let $\one{n}$ denote the $n$-dimensional vector $(1, \ldots, 1)$.
 The standard $k$-diagonal unit direction is given by 
\begin{equation}\label{eq:diagdirections}
\mbf{d}_{n,k}:=  
\frac {1}{\sqrt{k}} \cdot \big(\one{k}, \zero{n-k} \big)
\end{equation}
for $k = 1, \ldots, n$, where, naturally, $\zero{n-k}$ is the zero vector of $\R^{n-k}$. 
Up to permuting  coordinates and changing signs, all $k$-diagonal directions are of the above form. Such normal vectors and the corresponding central  sections of $Q_n$ will simply be called {\em diagonal}. 
For special values of $k$, we simplify the notation by writing
\[\mbf d_{n,1}=: \mbf e_1 \ \text{ and } \ \mbf d_{n,n} =: \mbf d_n.\]
In particular, $\mbf d_n = \frac 1 {\sqrt{n}} \one{n}$.

As a special case of a more general result, Pournin~\cite{Pournin-local} proved by local optimization techniques that all the diagonal sections of $Q_n$ are strictly locally extremal whenever $n \geq 4$. Our first result provides an alternative proof for this fact for  main diagonal sections.

\begin{theorem}\label{thm:localmax}
The main diagonal section $ Q_n \cap \one{n}^\perp$ has strictly locally maximal volume among central sections of $Q_n$ for each $n \geq 4$.
\end{theorem}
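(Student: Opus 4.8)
The plan is to recast the assertion as the negative definiteness of the Hessian of $\sigma$, restricted to the sphere, at the direction $\mbf{d}_n = \tfrac{1}{\sqrt n}\one{n}$, to cut that Hessian down --- exploiting the symmetry of the configuration --- to a single inequality between values of the Laplace--Pólya integral, and then to dispatch that inequality via the sharp rate-of-decay estimates for $J_n$.

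I would start from the classical Fourier representation $\sigma(\vv) = \tfrac1\pi\int_{-\infty}^{\infty}\prod_{j=1}^n \sinc(v_j t)\dd t$, valid for a unit vector $\vv$ with no vanishing coordinate; it makes $\sigma$ real-analytic near $\mbf{d}_n$ and gives $\sigma(\mbf{d}_n) = \tfrac{\sqrt n}{\pi}J_n(0)$. Because $\mbf{d}_n$ is fixed by every coordinate permutation and $\sigma$ is permutation-symmetric, $\mbf{d}_n$ is automatically a critical direction, and the Hessian $H$ of $\sigma|_{S^{n-1}}$ at $\mbf{d}_n$ is an $S_n$-invariant symmetric bilinear form on the tangent space $\mbf{d}_n^\perp = \{u\colon \sum_j u_j = 0\}$. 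Since $\mbf{d}_n^\perp$ is the standard representation of $S_n$, irreducible over $\R$, Schur's lemma forces $H$ to be a scalar multiple of the Euclidean form, so it suffices to verify that $H$ is negative on one single nonzero tangent vector. I would take that vector to be $\vv'(0)$, where
\[
\vv(\theta) = \tfrac1{\sqrt n}\bigl(\cos\theta + \sin\theta,\ \cos\theta - \sin\theta,\ 1,\dots,1\bigr)\in S^{n-1},\qquad \vv(0) = \mbf{d}_n;
\]
then $g(\theta) := \sigma(\vv(\theta))$ is even, so $g'(0) = 0$ and $g''(0) = H[\vv'(0),\vv'(0)]$, and the theorem reduces to $g''(0) < 0$ for $n \ge 4$.

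To evaluate $g''(0)$ I would substitute $t = \sqrt n\, s$ in the Fourier formula, differentiate the first two factors twice in $\theta$, and set $\theta = 0$. Writing $f = \sinc$, this produces a combination of $f, f', f''$; the elementary identities $s f'(s) = \cos s - f(s)$, the one obtained from it by a further differentiation, and $s^2 f(s)^2 = \sin^2 s$ trigger a clean cancellation, leaving only $-2 f^{n-1}(s)\cos s + 4 f^{n}(s) - 2 f^{n-2}(s)$ under the integral. Integrating (differentiation under the integral sign is routine for $n \ge 4$, most transparently after the product-to-sum rewriting $\sinc(as)\sinc(bs) = \tfrac{1}{2abs^2}(\cos((a-b)s) - \cos((a+b)s))$), one gets
\[
g''(0) = \frac{2\sqrt n}{\pi}\bigl(2J_n(0) - J_{n-1}(1) - J_{n-2}(0)\bigr).
\]
A single integration by parts, using $\cos s = f(s) + s f'(s)$ and $s\,\sinc^n(s) \to 0$, yields the exact identity $J_{n-1}(1) = \tfrac{n-1}{n}J_n(0)$; substituting it back leaves $g''(0) = \tfrac{2\sqrt n}{\pi}\bigl(\tfrac{n+1}{n}J_n(0) - J_{n-2}(0)\bigr)$, so the theorem is equivalent to the compact inequality
\[
n\,J_{n-2}(0) > (n+1)\,J_n(0)\qquad\text{for all } n \ge 4.
\]

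The genuinely hard step is this last inequality, and this is exactly where a precise handle on the decay of the Laplace--Pólya integral becomes indispensable. Since $J_m(0) \sim \sqrt{6\pi/m}$, the two sides agree to leading order --- indeed $n\,J_{n-2}(0) \big/ \big((n+1)J_n(0)\big) = 1 + O(n^{-2})$ --- so the inequality is asymptotically tight, only the second-order term of the expansion of $J_m(0)$ carries the favourable sign, and no soft bound can possibly decide it. (This also explains why $n \ge 4$ is sharp: at $n = 3$ one has the equality $3 J_1(0) = 3\pi = 4 J_3(0)$, matching the fact that the hexagonal main-diagonal section of $Q_3$ is not locally maximal.) I would therefore finish by feeding in the sharp two-sided estimates for $J_m(0)$ --- equivalently, the improved bounds on the relevant Eulerian numbers of the first kind --- that form the combinatorial core of the paper, checking that the surviving term has the correct sign for every $n \ge 4$ and settling the first few values by direct computation.
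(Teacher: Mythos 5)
Your proposal is correct, and its computational core coincides with the paper's: your curve $\vv(\theta)$ has $\vv'(0)\propto(1,-1,0,\ldots,0)$, and $g''(0)$ is (up to a positive factor) exactly the quantity $\beta-\gamma$ that the paper shows to be negative; indeed, rewriting your reduced inequality $n\,J_{n-2}(0)>(n+1)J_n(0)$ via the recursion \eqref{eq:Jnr-recursion} gives precisely the paper's condition $(3-n)J_{n-2}(0)+\tfrac{n(n+1)}{n-2}J_{n-2}(2)<0$. Where you genuinely diverge is in how a single direction is shown to suffice and in the final combinatorial input. The paper treats the problem as constrained optimization: it forms the bordered Hessian of the Lagrange function \eqref{eq:lagrangefunction}, exploits the $(\alpha,\beta,\gamma)$ structure to evaluate all principal minors as $-(m-1)\alpha^2\delta^{m-2}$, and reduces to $\delta=\beta-\gamma<0$, which it settles by Theorem~\ref{thm:ratio} for $n\ge6$ and by direct computation for $n=4,5$. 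You instead invoke the $S_n$-symmetry: the stabilizer of $\mbf d_n$ acts on the tangent space $\mbf d_n^\perp$ by the standard representation, which is absolutely irreducible, so by Schur's lemma the intrinsic Hessian of $\sigma|_{S^{n-1}}$ is a scalar multiple of the metric and one test direction decides definiteness; this replaces the determinant bookkeeping by a conceptual argument (and quietly also gives criticality of $\mbf d_n$, which the paper gets from Proposition~\ref{thm:Equ-2.12}). Your closing step can be made completely precise with no asymptotics and no case-splitting: the inequality $n\,J_{n-2}(0)>(n+1)J_n(0)$ for $n\ge4$ is exactly Corollary~\ref{cor:ineq} with $n$ replaced by $n-2$, so a one-line citation finishes the proof uniformly — this is slightly cleaner than the paper's separate treatment of $n=4,5$. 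Two cosmetic blemishes: your formulas carry a spurious $1/\pi$ because you use the unnormalized convention for $J_m$ (harmless for signs, and your $n=3$ equality $3J_1(0)=4J_3(0)$ is correct in that convention, matching the paper's remark that $\delta=0$ there), and the aside attributing the favourable $O(n^{-2})$ margin to the second-order term of the expansion \eqref{eq:asymptotic} is not quite right (the dominant favourable contribution comes from the elementary prefactor $\tfrac{n}{n+1}\sqrt{n/(n-2)}$, the $-\tfrac{3}{20m}$ correction actually works against you); neither affects the validity of the argument.
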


It has been widely believed that all critical central sections of $Q_n$ need to be diagonal. This was verified for $n=2,3$ but disproved for $n=4$ in \cite{Ambrus}. Note that appending 0's to a lower dimensional critical direction  yields critical directions for $Q_n$. Thus, it is only of interest to ask for the existence of non-diagonal critical sections whose normal vector does not have any 0 coordinates -- equivalently, sections which are not parallel to any of the coordinate axes. 

We prove the existence of such non-diagonal critical central sections in {\em all} dimensions exceeding 3.

\begin{theorem}\label{thm:nondiag}
		For all $n\geq4$ there exist non-diagonal critical central sections of $Q_n$ whose normal vector has only non-zero coordinates.  
	\end{theorem}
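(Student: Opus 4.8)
The plan is to exploit the known characterization of critical directions (from \cite{Ivanov} or \cite{Ambrus}) together with a continuity/degree argument, using the Laplace--Pólya integral $J_n(r)$ as the analytic engine. Recall that, via the Fourier-analytic formula for $\sigma$, one has $\sigma(\vv) = \frac{2}{\pi}\prod_{i} v_i^{-1}\cdot(\text{something})$; more usefully, for a direction $\vv=(v_1,\dots,v_n)$ the section function and its gradient can be written as one-dimensional integrals of $\prod_i \sinc(v_i t)$ against $1$ and against $\cos$/$t\sin$ factors. The criticality condition then says that $\vv/|\vv|$ is critical on $S^{n-1}$ iff a certain system of $n$ integral equations holds; after homogenizing, being a critical direction with no zero coordinate is equivalent to saying that the vector $\big(g(v_1),\dots,g(v_n)\big)$ is parallel to $\vv$ itself, where $g(x) = \frac{\partial}{\partial x}\log\big(\text{partial section contribution}\big)$ can be expressed through $J_{n-1}$-type integrals. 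So the first step is to write down this criticality system cleanly in the form $\lambda v_i = F(v_i;\vv)$ for all $i$, with $F$ built from $J_n$.

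The second step is the existence argument. The diagonal direction $\mbf d_n = \tfrac1{\sqrt n}\one n$ is always a solution of the system; I want a \emph{second}, non-diagonal solution nearby. Consider directions of the two-block form $\vv = (\underbrace{a,\dots,a}_{k}, \underbrace{b,\dots,b}_{n-k})$ with $a,b>0$, which reduces the $n$-dimensional system to a single scalar equation $\Phi_{n,k}(a,b)=0$ (after scaling, a single equation in the ratio $s=b/a$). The diagonal corresponds to $s=1$. The idea is to show that the function $\Phi_{n,k}(\cdot)$, or equivalently the relevant derivative of $\sigma$ restricted to this $1$-parameter family inside $S^{n-1}$, changes sign, hence has a root $s\neq 1$; this root gives a critical direction of the cube restricted to the block subspace, and one must then check it is genuinely critical in \emph{all} $n$ directions (which the block symmetry guarantees, since the gradient can only point along the two block directions, and the scalar equation kills exactly that component). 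This is where the quantitative decay estimate on $J_n(r)$ enters: to locate the sign change of $\Phi_{n,k}$ one needs sharp enough control on $J_{n-1}(r)$ and its derivative at the relevant values of $r$ (essentially $r = \sqrt n$ and nearby), and the combinatorial bound on $J_n$ advertised in the abstract is exactly what makes these estimates rigorous and dimension-uniform rather than just asymptotic.

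The third step is to make sure the produced section is non-diagonal in the strict sense of Theorem~\ref{thm:nondiag}: the normal vector must have only non-zero coordinates (automatic for the two-block ansatz with $a,b>0$) and must not be proportional to any $\mbf d_{n,j}$ up to permutation and sign. Since our vector has exactly two distinct coordinate values $a\neq b$ both positive, it is of the form $\mbf d_{n,j}$ only if one of the values is $0$, which is excluded; so once $s\neq 1$ we are done. One subtlety to dispatch: for small $n$ (the base case $n=4$, and perhaps $n=5,6$) the asymptotic estimates on $J_n$ may be too weak, and these cases should be handled by a direct, possibly computer-assisted, evaluation of $\Phi_{n,k}$ at two explicit points to exhibit the sign change — the $n=4$ non-diagonal example from \cite{Ambrus} can serve as a sanity check and starting point.

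I expect the main obstacle to be the quantitative step: converting ``$J_{n-1}$ decays like such-and-such'' into a provable strict sign change of $\Phi_{n,k}$ \emph{for every} $n\ge 4$ (or every $n$ above some threshold, with the remaining finitely many handled separately). The difficulty is that near the diagonal the scalar equation $\Phi_{n,k}=0$ has the known root $s=1$, so one is really chasing a second root, which means one needs to understand the \emph{shape} of $\Phi_{n,k}$ (its monotonicity, or the sign of $\Phi_{n,k}'$ at $s=1$ versus its sign as $s$ moves away) — a second-order rather than first-order phenomenon, and hence genuinely sensitive to the precise constants in the $J_n$-decay estimate. Getting those constants right, uniformly in $n$, via the Eulerian-number/combinatorial route is the technical heart of the whole argument.
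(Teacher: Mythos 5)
Your skeleton coincides with the paper's second proof: restrict to the two-block family $\mbf v_{n,k}(a)=(a,\dots,a,b,\dots,b)\in S^{n-1}$, use the characterization of critical directions (Proposition~\ref{thm:Equ-2.12}) to reduce criticality to one scalar equation $F_{n,k}(a)=0$ (your $\Phi_{n,k}$), note that block symmetry makes this single equation sufficient, observe that non-diagonality and non-vanishing of coordinates are automatic once $a\neq b$ with $a,b>0$, and control the derivative of the scalar function at the main diagonal by the combinatorial $J_n$-estimates. Indeed, the paper shows $F_{n,2}'\big(\tfrac1{\sqrt n}\big)<0$ for all $n\geq4$, and this reduces exactly to Corollary~\ref{cor:ineq}; note that after substituting $a=\tfrac1{\sqrt n}$ everything collapses to integer arguments $J_{n-2}(0),J_{n-1}(1),J_{n-2}(2)$ — no values like $r=\sqrt n$ ever appear, so your worry about non-integer arguments is misplaced.

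The genuine gap is the other half of the sign-change argument, which you explicitly leave open (``the main obstacle''). Knowing that $F_{n,2}$ vanishes at the main diagonal and decreases there does not produce a second interior zero: the other endpoint $a=\tfrac1{\sqrt2}$ (i.e.\ $b=0$, the $2$-diagonal $\mbf d_{n,2}$) is \emph{also} a zero of the scalar equation, so you must control the one-sided behaviour of $F_{n,2}$ near that endpoint before any intermediate-value or degree argument closes. Vague ``sharp enough control on $J_{n-1}$'' will not do this, because for $a$ away from the diagonal the integrand involves two distinct frequencies and is no longer expressible through $J_m$ at integer arguments. The paper supplies this missing ingredient in one of two concrete ways: either (i) invoke Ball's theorem that $\mbf d_{n,2}$ is the strict global maximizer together with Theorem~\ref{thm:localmax}, so that the restriction of $\sigma$ to the arc has a strict local maximum at both endpoints with a larger value at $\tfrac1{\sqrt2}$, forcing an interior local minimum; or (ii) Lemma~\ref{lem:3}: for $a\in\big[\sqrt{\tfrac{n-2}{2n-3}},\tfrac1{\sqrt2}\big]$ one has $\tfrac ab\geq n-2$, which lets one compute the two relevant parallel sections \emph{exactly} by orthogonal projection (onto a facet and onto a $2$-diagonal central section of $Q_{n-1}$), yielding a closed-form lower bound that shows $F_{n,2}(a)\geq0$ there. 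Either device is needed; without one, your plan does not go through. A side benefit of the paper's route is that it is uniform in $n\geq4$, so no computer-assisted treatment of small dimensions is required, and it is independent of which $k$ you choose only in appearance — the positivity step is carried out (and, per the paper's numerics, is expected to hold) only for $k=2$.
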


Furthermore, in Section~\ref{sec:saddle} we demonstrate that the critical directions constructed for the proof of Theorem~\ref{thm:nondiag} are saddle points of $\sigma(\mbf{v})$ on $S^{d-1}$. This leaves open the question of existence of non-diagonal critical sections which are {\em locally extremal} with respect to the central section function $\sigma(\mbf{v})$. Based on numerical evidence, we suspect that there are no such examples. 

\begin{conjecture}\label{conj:all-diag}
All locally extremal central sections of $Q_n$ are diagonal for each $n \geq 2$.
\end{conjecture}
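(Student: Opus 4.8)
Since this is an open problem, I outline a strategy rather than a proof. The natural plan is to combine the classification of critical directions with a second-order analysis, reducing the conjecture to the assertion that \emph{every non-diagonal critical direction of $Q_n$ is a saddle point of $\sigma$ on $S^{n-1}$}. By the invariance of $\sigma$ under coordinate permutations, reflections, and the appending of zero coordinates, one may restrict attention to full-support directions $\vv$ with $v_1\geq\cdots\geq v_n>0$ and $\sum_j v_j^2=1$; such a $\vv$ is diagonal exactly when all $v_j$ coincide. For diagonal directions the conjecture already holds: Theorem~\ref{thm:localmax} handles the main diagonal, Pournin~\cite{Pournin-local} treats $\mbf d_{n,k}$ for all $1\le k\le n$ and $n\ge4$, and $n\le 3$ is classical. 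It therefore remains to show that the non-diagonal critical directions, which exist in every dimension $n\ge4$ by Theorem~\ref{thm:nondiag}, are all saddles --- upgrading the computation of Section~\ref{sec:saddle}, which does this for one explicit family, to the full class.

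The basic mechanism would be to restrict $\sigma$ to suitable low-dimensional subspheres through $\vv$. Fixing all but two coordinates $v_i,v_j$ and rotating within the $(e_i,e_j)$-plane, $\sigma$ becomes a one-variable function $F_{ij}(\theta)$; via the Pólya--Ball formula, $F_{ij}(\theta)$ equals, in a suitable normalization, the density at $0$ of $\rho\cos\theta\, U_i+\rho\sin\theta\, U_j+Y$, where $\rho^2=v_i^2+v_j^2$, the variables $U_i,U_j$ are uniform on $[-\tfrac12,\tfrac12]$, and $Y$ is independent with the log-concave density of $\sum_{k\ne i,j}v_kU_k$. One would study the critical points of such $F_{ij}$ and of the analogous three-coordinate restrictions, and then assemble, at any non-diagonal critical $\vv$, one tangent direction along which $\sigma$ strictly increases and one along which it strictly decreases. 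A promising first reduction is to prove the numerically observed structural fact that every non-diagonal critical direction takes only two distinct coordinate values $a>b>0$, with multiplicities $p$ and $q=n-p$: criticality would then pin $(a,b)$ down to finitely many values, and the saddle property could be read off from the interplay between a ``within-group'' perturbation and a ``between-group'' perturbation, exactly as in Section~\ref{sec:saddle}.

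The principal obstacle is that $\sigma$ is \emph{neither} Schur-concave \emph{nor} Schur-convex on $S^{n-1}$: its global maximum is the $2$-diagonal $\mbf d_{n,2}$ rather than the full diagonal $\mbf d_n$, so an ``equalizing'' move between two unequal coordinates can either raise or lower $\sigma$ depending on the sizes of the remaining coordinates. Consequently the signs of the relevant second derivatives of $\sigma$ are genuinely configuration-dependent, and controlling them uniformly in $n$ is where the quantitative input is needed --- precisely the regime addressed, for pure powers $\sinc^n t$, by the decay estimates on the Laplace--Pólya integral $J_n$ developed here. Extending those estimates from $\sinc^n t$ to mixed products $\prod_j \sinc(v_j t)$ with unequal frequencies, in a form sharp enough to sign the Hessian, does not appear to be formal and seems to be the crux. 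Two further difficulties must be dealt with: ruling out degenerate critical directions at which the Hessian is merely negative semidefinite (which would call for a third-order or global argument), and --- logically prior to everything above --- establishing a classification of critical directions complete enough to guarantee that no non-diagonal critical direction escapes the families treated.
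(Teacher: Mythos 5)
You have correctly recognized that this statement is an open conjecture: the paper does not prove it, and offers it (Conjecture~\ref{conj:all-diag}) only on the basis of numerical evidence plus one piece of supporting theory, namely the computation in Section~\ref{sec:saddle} showing that the particular non-diagonal critical directions $\mbf v_{n,2}(\xi_n)$ constructed for Theorem~\ref{thm:nondiag} are saddle points of $\sigma$ on the sphere. Since there is no proof in the paper to compare against, your submission can only be judged as a research programme, and as such it is sensible and consistent with the authors' viewpoint: reduce the conjecture to the assertion that every non-diagonal critical direction is a saddle (local extrema of the differentiable function $\sigma$ on $S^{n-1}$ are necessarily critical, and the diagonal cases are settled by Theorem~\ref{thm:localmax}, Pournin's work, and the classical low-dimensional results), then try to sign appropriate second-order quantities. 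Your two-coordinate rotation $F_{ij}(\theta)$ and the ``within-group vs.\ between-group'' perturbation scheme are exactly the mechanism the paper uses in Section~\ref{sec:saddle}, where the test direction $\mbf q=(1,-1,0,\ldots,0)$ plays the between-group role and the negativity is extracted from Corollary~\ref{cor:ineq} together with $\sigma(\mbf w)<\sigma(\mbf d_n)$.

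The gap, which you yourself name, is that none of the reductions you rely on is established: (i) there is no classification of critical directions sharp enough to guarantee that every non-diagonal critical direction has only two distinct coordinate values (Proposition~\ref{thm:Equ-2.12} characterizes criticality by $n$ integral identities but gives no such structural conclusion, and the paper's own construction only produces, rather than exhausts, critical directions); (ii) the decay estimates of Theorem~\ref{thm:ratio} concern the pure power $\sinc^n t$, and the paper's saddle argument avoids mixed products only because the specific point $\mbf w$ can be compared against the diagonal values $\sigma(\mbf d_n)$ and $\sigma(\mbf d_{n-2})$; for a general critical $\vv$ no analogous comparison is available, and extending the combinatorial $J_n$ estimates to $\prod_j\sinc(v_jt)$ with unequal frequencies is precisely the unresolved technical core; (iii) degenerate (semidefinite) Hessians are not excluded by any result in the paper. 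So the proposal is a reasonable plan whose every load-bearing step remains open; it should not be mistaken for a proof, and indeed the conjecture remains open in the paper as well.
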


The protagonist of the subsequent arguments is the {\em Laplace-Pólya integral}
	\begin{equation}\label{eq:Jnr}
		J_n(r):= \frac 1 \pi \int_{-\infty}^\infty\sinc^nt\cdot\cos(rt)\dd{t},
	\end{equation}
where $n$ is a positive integer, $r \in \R$, and $\sinc$ denotes the unnormalized sine cardinal function, that is
	\[\sinc x:=\begin{cases}\dfrac{\sin x}{x}&\text{if}\ x\neq0,\\[5pt]1&\text{if}\ x=0.\end{cases}\]
The integral \eqref{eq:Jnr} appears in a number of diverse problems. In particular, Laplace~\cite{Laplace} studied it in connection with  probability theory, while Pólya~\cite{Pólya} focused on  its importance in statistical mechanics.

For $n \geq 2$, or $n=1$ and $r \neq \pm1$, $J_n(r)$ may also be expressed as the density of the following Irwin-Hall distribution:
	\begin{equation}\label{eq:Jnr=fsum}
	    J_n(r)=2\cdot f_{\sum_{i=1}^nX_i}(r)
	\end{equation}
where $X_1, \ldots, X_n$ are independent random variables uniformly distributed on $[-1, 1]$, and $f_X(.)$ denotes the probability density function of a continuous random variable $X$. This stochastic interpretation, which is discussed in detail in Section~\ref{sec:prelim}, also  implies  that $J_n(r)=0$ for $|r| \geq n$ when $n  \geq 2$.

The crux of the subsequent arguments is a precise  estimate on the rate of decay of $J_n(r)$, which will be proved by entirely combinatorial means in Section~\ref{sec:integral}.

    \begin{theorem}\label{thm:ratio}
        Let $n\geq 4$ and $r$ be integers satisfying $-1\leq r\leq n-2$. Then
        \begin{equation}\label{eq:Jeq}
            \frac{J_n(r+2)}{J_n(r)} \leq c_{n,r},
        \end{equation}
        where
        \begin{equation}\label{eq:cnr}
            c_{n,r}=\dfrac{(n-r-2)(n-r)(n-r+2)}{(n+r)(n+r+2)(n+r+4)}.
        \end{equation}
    \end{theorem}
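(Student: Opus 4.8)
The plan is to pass from the integral to the integer‑point values of the Laplace–Pólya function and exploit a clean three‑term recursion for them. Set
\[
D_n(r):=2^{n-1}(n-1)!\,J_n(r).
\]
Using the Irwin–Hall interpretation \eqref{eq:Jnr=fsum} one gets, for every integer $r$, the closed form $D_n(r)=\sum_{k\ge 0}(-1)^k\binom nk\big(n-|r|-2k\big)_+^{\,n-1}$, so $D_n(r)$ is a nonnegative integer (it vanishes for $|r|\ge n$); for $n\equiv r\pmod 2$ it equals $2^{n-1}A\!\left(n-1,\tfrac{n+r}{2}-1\right)$, an Eulerian number of the first kind scaled by $2^{n-1}$ — this is what yields the advertised bounds for Eulerian numbers. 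The target inequality \eqref{eq:Jeq} is then equivalent to the polynomial inequality
\[
(n+r)(n+r+2)(n+r+4)\,D_n(r+2)\;\le\;(n-r-2)(n-r)(n-r+2)\,D_n(r).
\]

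First I would establish the recursion
\[
D_n(r)=(n+r)\,D_{n-1}(r+1)+(n-r)\,D_{n-1}(r-1)
\]
for all integers $n\ge 2$ and all integer $r$. When $n\equiv r\pmod 2$ this is, after unwinding the power of $2$, exactly the classical Eulerian recursion $A(N,k)=(k+1)A(N-1,k)+(N-k)A(N-1,k-1)$, with its usual bijective proof via placing the largest letter of a permutation; the opposite parity is treated identically with the ``half‑shifted'' analogue, or one checks the identity directly as a one‑line finite‑difference computation on the closed form (equivalently, from the convolution identity $J_n=\tfrac12\,\one{[-1,1]}\ast J_{n-1}$).

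Next I would prove the displayed polynomial inequality by induction on $n$, the base $n=4$ being a hand check (it reads $D_4(r+2)\le c_{4,r}D_4(r)$ for $r\in\{-1,0,1,2\}$, and the values $(D_4(0),\dots,D_4(4))=(32,23,8,1,0)$ make it hold, with equality at $r=-1$ and $r=0$). For the step, substitute the recursion on both sides, write $a=D_{n-1}(r-1)$, $b=D_{n-1}(r+1)$, $c=D_{n-1}(r+3)$ (all $\ge 0$), and apply the inductive hypothesis to the pairs $(r-1,r+1)$ and $(r+1,r+3)$ at dimension $n-1$ to get $b\le c_{n-1,r-1}a$ and $c\le c_{n-1,r+1}b$. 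Using the elementary identities $c_{n,r}/c_{n-1,r-1}=(n+r-2)/(n+r+4)$ and $c_{n-1,r+1}=c_{n,r}\,(n-r-4)/(n-r+2)$, the whole inequality collapses after routine cancellation to $\dfrac{(n-r-2)(n-r)}{(n+r)(n+r+2)}\ge\dfrac{n-2r-2}{n+2r+2}$, and cross‑multiplying one finds that the two sides differ by exactly $4\,r(r+1)(r+2)$, which is $\ge 0$ since $r\ge 0$ (the case $r=-1$ being the trivial equality $c_{n,-1}=1$, $J_n(1)=J_n(-1)$).

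The genuinely delicate points are two. The first is getting the recursion in precisely this shape: a priori it is not clear any recursion will interact well enough with the cubic constant $c_{n,r}$ for an induction to close, and the miraculous cancellation down to $r(r+1)(r+2)\ge 0$ is the crux. The second is bookkeeping the legitimacy of the inductive hypothesis: the instance for $(r+1,r+3)$ needs $r\le n-4$, so the boundary values $r\in\{n-3,n-2\}$ — where $D_{n-1}(r+3)=0$ and the associated $c_{n-1,r+1}$ is out of range or negative — must be handled separately, but there the inequality degenerates and reduces to an elementary estimate valid for all $n\ge 4$. Everything else is straightforward algebra.
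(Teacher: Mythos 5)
Your proposal is correct and is essentially the paper's own argument: induction on $n$ with base case $n=4$, the three-term recursion $J_n(r)=\tfrac{n+r}{2(n-1)}J_{n-1}(r+1)+\tfrac{n-r}{2(n-1)}J_{n-1}(r-1)$ (your $D_n$-recursion is just this rescaled by $2^{n-1}(n-1)!$, equivalently the Eulerian recursion), two applications of the inductive hypothesis at $r\pm1$, the same cancellation down to a positive multiple of $r(r+1)(r+2)\ge0$, and separate treatment of the boundary cases $r=-1$ and $r=n-3,n-2$ (where the paper's degenerate case reduces to $0\le 8n^2-20n+3$). The only cosmetic differences are the integer normalization $D_n(r)$ and deriving the recursion from the Eulerian/convolution identity instead of citing Thompson.
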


As an immediate corollary we derive the following bound.
   \begin{corollary}\label{cor:ineq}
		For each $n\geq2$,
		\begin{equation}\label{eq:ineq-Jn}
			(n+3)J_{n+2}(0)<(n+2)J_{n}(0).
		\end{equation}
	\end{corollary}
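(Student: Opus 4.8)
The plan is to derive Corollary~\ref{cor:ineq} directly from Theorem~\ref{thm:ratio} by specializing to $r=0$. First I would observe that $r=0$ satisfies the hypothesis $-1 \leq r \leq n-2$ precisely when $n \geq 2$, so the inequality $J_n(2)/J_n(0) \leq c_{n,0}$ holds for all $n \geq 4$; the small cases $n=2,3$ will need to be checked separately (see below). With $r=0$, the constant simplifies to
\[
c_{n,0} = \frac{(n-2)\,n\,(n+2)}{n(n+2)(n+4)} = \frac{n-2}{n+4}.
\]
This gives $J_n(2) \leq \frac{n-2}{n+4}\, J_n(0)$ for $n \geq 4$.

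Next I would connect $J_n(2)$ with $J_{n+2}(0)$. The natural tool is the recursion for the Laplace--Pólya integral coming from the identity $\sinc^{n+2} t = \sinc^n t \cdot \sinc^2 t$ together with the trigonometric identity $\sinc^2 t = \tfrac12(1 + \cos(2t))\cdot\tfrac{1}{t^2}$ — more precisely, the standard second-difference recursion obtained by writing $\sinc^2 t$ via $\cos(2t)$ and integrating by parts twice. One clean form of this is
\[
J_{n+2}(0) = \text{(linear combination of } J_n(0)\text{ and }J_n(2)\text{)},
\]
which I expect to take the shape $n(n+1)\,J_{n+2}(0) = (\text{something})\,J_n(0) + (\text{something})\,J_n(2)$ after the two integrations by parts are carried out; the precise coefficients should be recorded in Section~\ref{sec:prelim} or Section~\ref{sec:integral} of the paper. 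Substituting the bound $J_n(2) \leq \frac{n-2}{n+4} J_n(0)$ into this identity and simplifying the resulting rational expression in $n$ should yield exactly $(n+3) J_{n+2}(0) < (n+2) J_n(0)$, with the inequality strict because $J_n(2) < c_{n,0} J_n(0)$ strictly (the decay estimate in Theorem~\ref{thm:ratio} is, one expects, strict for $r=0$, or else strictness follows since $J_n(r)>0$ at the relevant points).

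The main obstacle — really the only non-mechanical point — is pinning down the correct recursion relating $J_{n+2}(0)$, $J_n(0)$, and $J_n(2)$, and making sure the algebra of combining it with $c_{n,0} = \frac{n-2}{n+4}$ collapses to the stated clean inequality; this is a routine but error-prone computation with rational functions of $n$. Finally, for the residual small cases $n = 2, 3$ (where Theorem~\ref{thm:ratio} does not apply directly), I would simply evaluate both sides of~\eqref{eq:ineq-Jn} explicitly: $J_n(0)$ has an elementary closed form for small $n$ (for instance $J_2(0) = 1$, $J_3(0) = 3/4$, $J_4(0) = 2/3$, $J_5(0) = 115/192$), and one checks $5 J_4(0) = 10/3 < 4 = 4 J_2(0)$ and $6 J_5(0) = 115/32 < 9/2 = 5 J_3(0)$ by hand, completing the proof for all $n \geq 2$.
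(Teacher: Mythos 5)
Your plan is essentially the paper's proof: the paper sets $r=0$ in Theorem~\ref{thm:ratio} (so $c_{n,0}=\frac{n-2}{n+4}$), uses \eqref{eq:Jn0} together with the recursion \eqref{eq:Jnr-recursion} to write
\[
J_{n+2}(0)=\frac{(n+2)^2}{2n(n+1)}J_n(2)+\frac{n+2}{2(n+1)}J_n(0),
\]
substitutes the bound $J_n(2)\le\frac{n-2}{n+4}J_n(0)$, and finishes with the rational-function comparison $\frac{(n+2)(n^2+2n-2)}{n(n+1)(n+4)}<\frac{n+2}{n+3}$, checking $n=2,3$ from Table~\ref{tab:J2-8(0)} --- exactly the structure you outline. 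Two small corrections: your proposed source of strictness does not work, since Theorem~\ref{thm:ratio} is \emph{not} strict at $r=0$ for $n=4$ (Table~\ref{tab:ineq-J5k} gives $J_4(2)/J_4(0)=\tfrac14=c_{4,0}$), nor does positivity of $J_n(r)$ by itself give strictness; instead, strictness comes for free from the slack in the final comparison, which after clearing denominators reduces to $-6<0$. Also a minor arithmetic slip: $5J_3(0)=\tfrac{15}{4}$, not $\tfrac92$, though the check $6J_5(0)=\tfrac{115}{32}<\tfrac{15}{4}$ still goes through.
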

 
We note that for even values of $n$, Lesieur and Nicolas \cite{Lesieur-Euler} proved the slightly stronger estimate\footnote{We note that this bound and further esimates on the Laplace-Pólya integral can also be obtained by a combinatorial method similar to the present proofs, which is to be published in a subsequent paper.} 
\begin{equation}\label{eq:LNineq}
			(n+2)J_{n+2}(0)<(n+1)J_{n}(0)
\end{equation}
(also see~\eqref{eq:LN1}) by an intricate argument involving fine estimates for the power series expansion of Eulerian numbers of the first kind.

Further history of the problem and related results, in particular, the study of non-central and lower dimensional sections, are excellently surveyed in \cite{Nayar-extremal}.

\section{Preliminaries} \label{sec:prelim}

In this section we  review some of the necessary tools along the lines of the articles \cite{Ambrus,Ball86,koldobsky-fourier}. Pólya~\cite{Pólya} proved that the central
section function $\sigma(\vv)$ of a {\em unit} vector  $\mbf{v}=(v_1,\ldots,v_n)\in S^{n-1}$ may be evaluated by the classical integral formula 
	\begin{equation}\label{eq:PVn}
		\sigma(\mbf{v})=\frac{1}{\pi}\int_{-\infty}^\infty\prod_{i=1}^n\sinc{(v_it)}\dd{t}.
	\end{equation}
One can shortly derive an extension for arbitrary non-zero normal vectors, and non-central sections,  via the probabilistic interpretation as follows. 

Let now $\mbf{v}\in\R^n\setminus\{\zero{n}\}$ be an arbitrary non-zero vector, and define $S(\mbf{v},r)$ as the intersection of $Q_n$ and a hyperplane orthogonal to $\mbf{v}$ at distance $\frac{r}{\abs{\mbf{v}}}$ from the origin, that is
	\begin{equation}\label{eq:hyperplane}
	S({\mbf{v}},r):=\big \{\mbf{q}\in Q_n:\ \sca{\mbf{q}}{\mbf{v}}=r \big \}.
	\end{equation}
Introduce the {\em parallel section function}
	\begin{equation}\label{eq:pasecf}
		s({\mbf{v}},r):=\vol{n-1}{S(\mbf{v},r)};
	\end{equation}
then $\sigma(\vv) = s(\vv, 0)$.

	For a continuous random variable $X$, let $\varphi_X(.)$ denote its characteristic function. Let now $X_1,\ldots,X_n$ be independent random variables distributed uniformly on $[-1,1]$. 
 The joint distribution of $(X_1,\ldots,X_n)$ induces the normalized Lebesgue measure on $2 Q_n = [-1, 1]^n$. Accordingly, for arbitrary $\mbf{v}=(v_1,\ldots,v_n)\in\R^n$ and $r\in\R$
	\[\Pro\Bigg(\abs{\sum_{i=1}^nv_iX_i-r}\leq\varepsilon\Bigg)=\frac{1}{2^n}\vol{n}{\mbf{q}\in 2 Q_n:\ \abs{\sca{\mbf{q}}{\mbf{v}}-r}\leq\varepsilon} = \frac{\eps}{|\vv|} s\Big(\vv, \frac r 2 \Big) + o(\eps)
 \]
 provided that $S\big(\vv, \frac r 2 \big)$ is not a facet of $Q_n$. Dividing by $\eps$ and letting $\varepsilon\to0$ leads to
	\begin{equation}\label{eq:fsum}
		2 f_{\sum_{i=1}^nv_iX_i}(r)=\frac{1}{ \abs{\mbf{ v}}}s\Big(\vv, \frac r 2 \Big)
	\end{equation}
which holds whenever the left hand side exists. As is well known, the characteristic function of $\sum_{i=1}^n v_iX_i$ is
	\begin{equation}\label{eq:phisum}
		\varphi_{\sum_{i=1}^nv_iX_i}(t)=\prod_{i=1}^n\sinc{(v_it)}, 
	\end{equation}
	hence by taking the inverse Fourier transform one derives that
	\begin{equation}\label{eq:fsum-int}
		f_{\sum_{i=1}^nv_iX_i}(r)=\frac{1}{2\pi}\int_{-\infty}^\infty\prod_{i=1}^n\sinc{(v_it)}\cdot\cos{(rt)}\dd{t}.
	\end{equation}
	Therefore by \eqref{eq:fsum} and \eqref{eq:fsum-int} we obtain the following integral formula for $s(\mbf{v},r)$:
	\begin{equation}\label{eq:s-int}
		s\Big(\vv, \frac r 2 \Big)=\frac{\abs{\mbf{v}}}{\pi}\int_{-\infty}^\infty\prod_{i=1}^n\sinc{(v_it)}\cdot\cos{(rt)}\dd{t}.
	\end{equation}
	In particular,\footnote{Note that the factor $|\vv|$ is missing in \cite[formula (2.8)]{Ambrus}.}
	\begin{equation}\label{eq:sigma-int}
		\sigma(\mbf{v})=\frac{\abs{\mbf{v}}}{\pi}\int_{-\infty}^\infty\prod_{i=1}^n\sinc{(v_it)}\dd{t}
	\end{equation}
 which implies \eqref{eq:PVn}.

As a special case of \eqref{eq:s-int} one also obtains that the volume of sections orthogonal to the main diagonal can be expressed as 
	\begin{equation}\label{eq:Jnr=s}
	   s\Big(\one{n}, \frac r 2 \Big)=\sqrt{n} J_n(r).
	\end{equation}
In particular, central main diagonal sections are given by
\begin{equation}\label{eq:sigmaJn}
    \sigma(\one{n}) = \sqrt{n}J_n(0).
\end{equation}

Returning to \eqref{eq:PVn}, we derive that for unit vectors $\vv$
\begin{equation}\label{eq:pd}
  \frac{\partial}{\partial v_k} \sigma(\vv) = \frac 1 \pi \int_{-\infty}^\infty \prod_{ i \neq k} \sinc{(v_it)} \cdot \frac{\cos (v_k t) -\sinc{(v_kt)}}{v_k}\dd{t}
\end{equation}
(the differentiability property of the function $\sigma(\vv)$ is rigorously proven in the works of L. Pournin~\cite{Pournin-shallow,Pournin-local}).
Based on the Lagrange multiplier method, the following characterization was given in \cite{Ambrus} for critical points of $\sigma(\vv)$ on $S^{n-1}$ (note that in the present article we normalize $\sigma(\vv)$ differently):
    \begin{proposition}[\cite{Ambrus}, Formula (2.12)]\label{thm:Equ-2.12}
		The unit vector $\mbf{v}=(v_1,\ldots,v_n)\in S^{n-1}$ is a critical direction with respect to the central section function $\sigma(\mbf{v})$ if and only if up to permuting coordinates and changing signs\footnote{In \cite{Ambrus}, the trivial case $\mbf{v}=\mbf{e}_{1}$ was erroneously omitted.}  $\mbf{v}=\mbf{e}_{1}$, or
		\begin{equation}\label{eq:2.12}
			\sigma(\mbf{v})=\frac{1}{\pi (1-v_j^2)}\int_{-\infty}^\infty\prod_{i\neq j}\sinc{(v_it)}\cdot\cos{(v_jt)}\dd{t}
		\end{equation}
		holds for each $j=1,\ldots,n$.
	\end{proposition}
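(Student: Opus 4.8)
The plan is to apply the Lagrange multiplier method to the function $g(\vv)=\frac{1}{\pi}\int_{-\infty}^\infty\prod_{i=1}^n\sinc(v_i t)\dd t$, which coincides with $\sigma$ on $S^{n-1}$ and whose partial derivatives are exactly the integrals in \eqref{eq:pd}. By the differentiability results of Pournin~\cite{Pournin-shallow,Pournin-local} quoted above, $g$ is $C^1$ near $S^{n-1}$ away from the coordinate directions, so a unit vector $\vv$ (other than $\pm\mbf e_j$) is a critical direction precisely when $\nabla g(\vv)=\lambda\vv$ for some $\lambda\in\R$. First I would dispose of the coordinate directions: $\vv=\mbf e_1$, and up to permutations and sign changes every $\pm\mbf e_j$, yields the global minimum of $\sigma$ by the Hadwiger--Hensley theorem \cite{Hadwiger,Hensley}, hence is automatically critical; meanwhile \eqref{eq:2.12} is not even defined for the index $j$ with $v_j^2=1$ (which is precisely why these directions are singled out), and for every other index $v_j=0$, so that \eqref{eq:2.12} collapses to the identity $\sigma(\vv)=\frac{1}{\pi}\int_{-\infty}^\infty\prod_{i\neq j}\sinc(v_i t)\dd t$, valid since $\sinc 0=1$. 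From now on one may assume $v_j^2<1$ for every $j$.

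For such a $\vv$ the core of the proof is to rewrite \eqref{eq:pd}. For an index $k$ with $v_k\neq0$ one splits
\[\prod_{i\neq k}\sinc(v_i t)\cdot\frac{\cos(v_k t)-\sinc(v_k t)}{v_k}=\frac{1}{v_k}\left(\prod_{i\neq k}\sinc(v_i t)\cos(v_k t)-\prod_{i=1}^n\sinc(v_i t)\right)\]
and integrates to obtain
\[\partial_{v_k}g(\vv)=\frac{1}{v_k}\left(\frac{1}{\pi}\int_{-\infty}^\infty\prod_{i\neq k}\sinc(v_i t)\cos(v_k t)\dd t-\sigma(\vv)\right),\]
whereas for an index $k$ with $v_k=0$ one has $\partial_{v_k}g(\vv)=0$ because $g$ is even in $v_k$. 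Consequently the Lagrange condition $\nabla g(\vv)=\lambda\vv$ is equivalent to
\[\frac{1}{\pi}\int_{-\infty}^\infty\prod_{i\neq k}\sinc(v_i t)\cos(v_k t)\dd t=\sigma(\vv)+\lambda v_k^2\qquad\text{for }k=1,\ldots,n,\]
the case $v_k=0$ holding automatically on both sides.

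It remains to pin down the multiplier, which is where I expect the only real subtlety to lie. Here I would use that the genuine $(n-1)$-volume $\sigma$ of \eqref{eq:sigma-int}, regarded as a function on $\R^n\setminus\{\zero n\}$, is positively homogeneous of degree $0$ (by rescaling $t$ in the integral), so Euler's identity gives $\sum_k v_k\,\partial_{v_k}\sigma(\vv)=0$. Since $\sigma(\vv)=\abs{\vv}\,g(\vv)$, differentiating at a unit vector and using $g=\sigma$ on $S^{n-1}$ yields $\partial_{v_k}\sigma(\vv)=v_k\,\sigma(\vv)+\partial_{v_k}g(\vv)$, hence $0=\sigma(\vv)+\sum_k v_k\,\partial_{v_k}g(\vv)$, i.e. $\sum_k v_k\,\partial_{v_k}g(\vv)=-\sigma(\vv)$. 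Inserting the critical relation $\partial_{v_k}g(\vv)=\lambda v_k$ into the last identity forces $\lambda=\lambda\abs{\vv}^2=-\sigma(\vv)$. Substituting $\lambda=-\sigma(\vv)$ into the displayed equivalence turns it into $\frac{1}{\pi}\int_{-\infty}^\infty\prod_{i\neq k}\sinc(v_i t)\cos(v_k t)\dd t=\sigma(\vv)\,(1-v_k^2)$, and dividing by $1-v_k^2>0$ gives exactly \eqref{eq:2.12}. Conversely, if \eqref{eq:2.12} holds for every $j$, then multiplying by $1-v_j^2$ and reversing the computation above shows $\partial_{v_j}g(\vv)=-\sigma(\vv)v_j$ for all $j$ (trivially so when $v_j=0$), i.e. $\nabla g(\vv)=-\sigma(\vv)\vv$, so $\vv$ is critical; combined with the coordinate-vector case this yields the stated equivalence.

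The steps needing the most care — and the point where the original derivation in \cite{Ambrus} slipped — are the inclusion of the coordinate directions $\mbf e_1$ and the clean evaluation of the multiplier, for which the degree-$0$ homogeneity of $\sigma$ is the decisive input pinning $\lambda=-\sigma(\vv)$. The remaining analytic points (differentiability of $\sigma$ and legitimacy of differentiating under the integral sign, including at points with vanishing coordinates) are routine given the cited results of Pournin, the decay $\prod_{i\neq k}\sinc(v_i t)=O(t^{-(n-1)})$ of the integrands involved, and the evenness of $g$ in each coordinate.
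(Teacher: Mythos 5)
Your proposal is correct and follows essentially the route the paper attributes to \cite{Ambrus}: apply the Lagrange multiplier method to the integral representation, using \eqref{eq:pd} to rewrite the gradient and then identifying the multiplier, which is exactly the content of \eqref{eq:partialder} and \eqref{eq:lagrangempl}. Your use of the degree-zero homogeneity of $\sigma$ (equivalently, degree $-1$ homogeneity of the integral $g$) to pin $\lambda=-\sigma(\vv)$, and your explicit treatment of the coordinate directions $\pm\mbf{e}_j$ (near which $\sigma(\vv)=1/v_1$, so they are trivially critical), are consistent with, and neatly justify, the facts the paper quotes, including the footnoted case $\vv=\mbf{e}_1$.
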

The argument also yields that at critical directions $\vv \in S^{n-1}$,
\begin{equation}\label{eq:partialder}
\frac{\partial}{\sigma v_i} \sigma(\mbf v)=-\sigma(\mbf v)\cdot v_i,
\end{equation}
see \cite[Proof of Theorem 1]{Ambrus}. Accordingly, the Lagrange function
\begin{equation}\label{eq:lagrangefunction}
\Lambda(\mbf v)=\sigma(\mbf v)+\tilde{\lambda}\cdot(\abs{\mbf v}^2-1)
\end{equation}
defined on $\R^n$ has a stationary point at $\vv$ with the Lagrange multiplier 
\begin{equation}\label{eq:lagrangempl}
   \Tilde{\lambda}=\frac{\sigma(\mbf v)}{2}. 
\end{equation}

We remark that, introducing the notation $\widetilde{\mbf{v}}_j=(v_1,\ldots,v_{j-1},v_{j+1},\ldots,v_n)\in\R^{n-1}$ and  using \eqref{eq:s-int}, equation \eqref{eq:2.12} translates to
	\begin{equation}\label{eq:sigma-stilde}
		\sigma(\mbf{v})=\frac{1}{(1-v_j^2)^{\frac32}}s\Big({\widetilde{\mbf{v}}}_j,\frac {v_j}{2}\Big).
	\end{equation}

\section{Properties of the Laplace-Pólya integral}\label{sec:integral}

In this section we study the integral formula $J_n(r)$ defined in \eqref{thm:nondiag} which is connected to various mathematical topics, see \cite{Bennett-sinc,Harumi-sinc,Medhurst-sinc,Pólya,Silberstein-sinc}. To us, its most prominent feature is the connection to diagonal sections provided by \eqref{eq:sigmaJn}.

	 The following explicit formula for $J_n(r)$ is well known, see \cite[pp. 165-170]{Laplace}:
	\begin{equation}\label{eq:Jnr-expli}
	    J_n(r)=\frac{1}{2^{n-1}(n-1)!}\sum_{i=0}^{\lfloor\frac{n+r}{2}\rfloor}(-1)^{i}\binom{n}{i}(n+r-2i)^{n-1}
	\end{equation}
which holds for  $|r|<n$.  Moreover, $J_n(r)$ can be expressed by the recursion
	\begin{equation}\label{eq:Jnr-recursion}
	    J_n(r)=\frac{n+r}{2(n-1)}J_{n-1}(r+1)+\frac{n-r}{2(n-1)}J_{n-1}(r-1)
	\end{equation}
 which was proved by Thompson \cite{Thompson}.
	Though this formula is valid for all $r\in\R$, we will only  use it for integer values of $r$. Since $J_n(r)$ is even in $r$, we specifically derive that
	\begin{equation}\label{eq:Jn0}
	    J_n(0)=\frac{n}{n-1}J_{n-1}(1)
	\end{equation}
	for $n\geq3$. Combined  with \eqref{eq:Jnr-recursion} this provides a simple way for computing the central values  $J_n(0)$ for small $n$'s, see Table~\ref{tab:J2-8(0)}.
	\begin{table}[h!] 
		\[\begin{NiceArray}{*{11}{c}}[hvlines-except-borders,cell-space-limits=5pt, columns-width =30pt]
			n&1&2&3&4&5&6&7&8&9&10\\
			J_{n}(0)&1&1&\frac{3}{4}&\frac{2}{3}&\frac{115}{192}&\frac{11}{20}&\frac{5887}{11520}&\frac{151}{315}&\frac{259723}{573440}&\frac{15619}{36288}
		\end{NiceArray}\]
		\caption{The value of $J_n(0)$ in the cases $1\leq n\leq10$.}
		\label{tab:J2-8(0)}
	\end{table}

The sequence $(J_n(0))_{n=1}^\infty$ possesses several monotonicity properties. Ball~\cite[Lemma 3]{Ball86} proved that $J_n(0)$ is monotone decreasing and converges to zero as $n\to\infty$. On the other hand, Aliev~\cite{Aliev} showed that  $nJ_n(0)$ is monotone increasing. According to  \eqref{eq:sigmaJn}, the value $\sqrt{n} J_n(0)$ is equal to the volume of the main diagonal section of $Q_n$.  Therefore, it is essential that
	\[\lim_{n\to\infty}\sqrt{n}J_n(0)=\sqrt{6\pi},\]
 see  \cite{Laplace,Pólya}.  
Based on Laplace's method for calculating the asymptotic expansion for $J_n(0)$, Bartha, Fodor and González Merino~\cite{Bartha-Fodor} proved that  the convergence is strictly monotone increasing for $n\geq3$, and noted that their method also implies that the sequence $\sqrt{n} J_n(0)$ is eventually concave. The asymptotic expansion up to order 3 reads as 
	\begin{equation}\label{eq:asymptotic}
			J_n(0)=\sqrt{\frac{6}{\pi n}}\bigg(1-\frac{3}{20n}-\frac{13}{1120n^2}+\frac{27}{3200n^3}+O\Big(\frac{1}{n^4}\Big)\bigg)
    \end{equation}
where the correct coefficients had been found (and subsequently corrected) in a series of papers \cite{Silberstein-sinc,Burgess-sinc,Grimsey-sinc,Goddard-sinc,Parker-sinc,Medhurst-sinc}). Even finer estimates were proved in \cite{Kerman,Schlage-sinc}.

A combinatorial interpretation of $J_n(r)$ stems from the connection with Eulerian numbers of the first kind $A(m,l)$ (which will simply be referred to as Eulerian numbers). These are recursively defined  \cite[pp. 240-243]{Comtet} for integers $m,l \geq 0$ by
\begin{align}\label{eq:Euler-recursion}
	\begin{split}
	    A(0,0)&:=1,\quad A(m,0):=0\ \text{for}\ m>0,\quad A(0,l):=0\ \text{for}\ l>0,\\[5pt] A(m,l)&=(m-l+1)A(m-1,l-1)+l A(m-1,l) \text{ for}\ m>0,\ l>0 .
	\end{split}
	\end{align}
Moreover they may be evaluated explicitly as
	\begin{equation}\label{eq:Euler-expli}
	    A(m,l)=\sum_{i=0}^l(-1)^{i}\binom{m+1}{i}(l-i)^m,
	\end{equation}
 see e.g. {\cite{carlitz-euler,Lesieur-Euler}}. Eulerian numbers can also be defined combinatorially \cite{Comtet} as the number of permutations of $\{1, \ldots, n\}$ in which exactly $l-1$ elements are greater than the previous element. This also shows the symmetry property
 \begin{equation}\label{eq:Euler-symm}
     A(m,l)=A(m,m-l+1).
 \end{equation}

 According to formulae \eqref{eq:Jnr-expli} and \eqref{eq:Euler-expli}, the following connection holds between the Laplace-Pólya integral and the Eulerian numbers:
	\begin{equation}\label{eq:Jnr-Euler}
	    J_n(r)=\frac{1}{(n-1)!}\,A\Big(n-1,\frac{n+r}{2}\Big)
	\end{equation}
	where $n\geq 2$ and $r $ is an integer s.t. $n+r$ is even. Due to this relation, Theorem~\ref{thm:ratio} leads to an estimate on the ratio between consecutive Eulerian numbers.

 	\begin{proposition}\label{thm:Lesieur-jobb}
	 For integers $l\geq 2$ and $m\geq 2l-1$
	    \begin{equation}\label{eq:Les-jobb}
	        A(m,l-1)\leq c_{m+1,m-2l+1}A(m,l),
	    \end{equation}
	    where $c_{n,r}$ is defined by formula {\eqref{eq:cnr}}.
	\end{proposition}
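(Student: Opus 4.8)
The plan is to deduce Proposition~\ref{thm:Lesieur-jobb} directly from Theorem~\ref{thm:ratio} by translating through the dictionary \eqref{eq:Jnr-Euler} between Eulerian numbers and the Laplace--Pólya integral. Set $n:=m+1$ and $r:=2l-m-1$, so that $n+r=2l$ is even with $\frac{n+r}{2}=l$, and $n+(r-2)=2(l-1)$. Then \eqref{eq:Jnr-Euler} gives
\[
A(m,l)=m!\,J_n(r), \qquad A(m,l-1)=m!\,J_n(r-2),
\]
so that the claimed inequality \eqref{eq:Les-jobb} is equivalent to $J_n(r-2)\le c_{m+1,\,m-2l+1}\,J_n(r)$. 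It remains to recognize this as an instance of \eqref{eq:Jeq}.

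To do so, put $\rho:=-r=m-2l+1$, an integer. The hypotheses $l\geq 2$ and $m\geq 2l-1$ translate precisely into the range needed for Theorem~\ref{thm:ratio}: from $m\geq 2l-1\geq 3$ we get $n=m+1\geq 4$; from $m\geq 2l-1$ we get $\rho\geq 0\geq -1$; and from $l\geq 1$ we get $\rho=m+1-2l\leq m-1=n-2$. Hence Theorem~\ref{thm:ratio}, applied with its $r$ replaced by $\rho$, yields
\[
\frac{J_n(\rho+2)}{J_n(\rho)}\leq c_{n,\rho}.
\]
Now invoke the evenness of $J_n$ in its argument: $J_n(\rho)=J_n(-\rho)=J_n(r)$ and $J_n(\rho+2)=J_n(m+3-2l)=J_n(2l-m-3)=J_n(r-2)$, while by definition $c_{n,\rho}=c_{m+1,\,m-2l+1}$. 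Combining with the first paragraph and multiplying through by $A(m,l)=m!\,J_n(r)>0$ — positivity holds since $|r|<n$ in this range, so $J_n(r)$ is the value of the continuous density in \eqref{eq:Jnr=fsum} at an interior point of its support — gives $A(m,l-1)\le c_{m+1,\,m-2l+1}A(m,l)$, as desired.

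I do not expect a genuine obstacle: the entire analytic content of the proposition is supplied by Theorem~\ref{thm:ratio}, and the proof is essentially bookkeeping. The only points requiring care are checking that the substitutions $n=m+1$, $r=2l-m-1$, $\rho=-r$ send the hypotheses on $(m,l)$ exactly onto the hypotheses $n\geq 4$, $-1\leq\rho\leq n-2$ of Theorem~\ref{thm:ratio}, and applying the parity of $J_n$ at the right step. (One could alternatively carry out the same substitution entirely inside \eqref{eq:Euler-expli}, but routing through $J_n$ is cleaner and reuses the estimate already established.)
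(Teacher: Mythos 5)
Your proposal is correct and follows essentially the same route as the paper: translate via \eqref{eq:Jnr-Euler}, apply Theorem~\ref{thm:ratio}, and use a symmetry to land in the admissible range $-1\leq\rho\leq n-2$. The only cosmetic difference is that you invoke the evenness of $J_n(\cdot)$ in $r$, whereas the paper applies the equivalent Eulerian symmetry \eqref{eq:Euler-symm} after transferring to $A(m,l)$; your range checks and the positivity remark needed to clear the denominator in \eqref{eq:Jeq} are all in order.
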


This strengthens the bound proved by Lesieur and Nicolas~\cite[Section 2.3, Theorem 3]{Lesieur-Euler} stating that for every $l\geq 2$ and $m\geq 2l-1$,
  \begin{equation}\label{eq:Lesieur}
	        A(m,l-1)<\Big(\frac{m-l}{m-l+2}\Big)^{m-2l+2}A(m,l).
	    \end{equation}
The authors also showed that~\cite[Section 3.5, Theorem 2]{Lesieur-Euler} 
\begin{equation}\label{eq:LN1}
\frac{m+1}{m+2}<\frac{m!}{(m+2)!}\cdot\frac{\max_{1\leq l\leq m+2}A(m+2,l)}{\max_{1\leq l\leq m}A(m,l)}<\frac{m+2}{m+3}
\end{equation}
for any odd $m$. Notice that
\[\frac{1}{m!}\max_{1\leq l\leq m}A(m,l)=\frac{1}{m!}A\Big(m,\Big\lfloor\frac m2\Big\rfloor+1\Big)=\begin{cases}J_{m+1}(1),&\text{if}\ m\ \text{is even}\\J_{m+1}(0),&\text{if}\ m\ \text{is odd}.\end{cases}\]
Therefore, by introducing $n=m+1$ and using \eqref{eq:Jn0}, inequality \eqref{eq:LN1} takes the  form
\begin{equation}\label{eq:LNestimate}
\frac{n(n-2)}{(n+2)^2}<\frac{J_n(2)}{J_n(0)}<\frac{n(n^2-2)}{(n+2)^3}
\end{equation}
which holds for any even $n$ greater than 3. Based on numerical calculations we do believe that these estimates are valid for every $n\geq3$. Note that the upper bound is slightly stronger than \eqref{eq:Jeq} for $r = 0$ and $n$ even --- yet, \eqref{eq:LNestimate} provides no estimate for  odd values of $n$, or $r \neq 0$, in contrast with Theorem~\ref{thm:ratio}.

\medskip 

Next, we establish the bound on the rate of decay of $J_n(r)$.

	    \begin{proof}[Proof of Theorem~\ref{thm:ratio}.]
        
        Suppose that $r$ is an integer satisfying $r\geq-1$. We will proceed by induction on $n$, with $n=4$ being the base case. 
        
        The values of $J_4(r)$ for $r=-1, \ldots, 4$ can be calculated by formula \eqref{eq:Jnr-expli}. Based on these, the $n=4$ case of \eqref{eq:Jeq} is easy to check, see Table~\ref{tab:ineq-J5k}.
        
        \begin{table}[h]
        \[\begin{NiceArray}{*{5}{c}}[hvlines-except-borders,cell-space-limits=5pt, columns-width =30 pt]
	    r&-1&\ 0 & 1 & 2 \\
            \frac{J_4(r+2)}{J_4(r)}&1&\frac14&\frac1{23}&0\\
            c_{4,r}&1&\frac14&\frac1{21}&0
	    %J_4(r)&\dfrac{2}{3}&\dfrac{23}{48}&\dfrac{1}{6}&\dfrac{1}{48}
	    \end{NiceArray}\]
	    \caption{Two sides of the inequality \eqref{eq:Jeq} for $n=4$ and $r=-1,\ldots,2$.}
	    \label{tab:ineq-J5k}
	    \end{table}
	
	    Suppose now that for some $n\geq4$,  \eqref{eq:Jeq} holds for each $-1\leq r\leq n-2$. We need to show      that 
             \begin{equation}\label{eq:proof-Jn+1-2}
            J_{n+1}(r+2)\leq c_{n+1,r}J_{n+1}(r)
        \end{equation}
        for each $r$ with $-1 \leq r \leq n-1$.

        For $r=-1$ or $r = n-1$, equality holds above: in the former case $c_{n+1, -1}=1$ and $J_{n+1}(-1) =J_{n+1}(1) $, while in the latter  $c_{n+1, n-1}=0$ and  $J_{n+1}(n+1) = 0$.
        
        Assume that $0 \leq r \leq n-3$. 
        By \eqref{eq:Jnr-recursion},
        \begin{align}\label{eq:J_{n+1}(r+3)}
            J_{n+1}(r+2)&=\frac{n+r+3}{2n}J_n(r+3)+\frac{n-r-1}{2n}J_n(r+1),\\
            J_{n+1}(r)&=\frac{n+r+1}{2n}J_n(r+1)+\frac{n-r+1}{2n}J_n(r-1). \notag
        \end{align}
        Applying the induction hypothesis for the pairs $(n,r+1)$ and  $(n,r-1)$ shows that
	     \begin{align*}
	        J_{n}(r+3)&\leq c_{n,r+1}J_n(r+1), \notag\\
	        \frac{1}{c_{n,r-1}}J_n(r+1)&\leq J_n(r-1). 
        \end{align*}
       Thus, \eqref{eq:proof-Jn+1-2} follows from the inequality
        \begin{equation*}\label{eq:Jc2}
            \frac{n+r+3}{2n}c_{n,r+1}+\frac{n-r-1}{2n}\leq c_{n+1,r}\Big(\frac{n+r+1}{2n}+\frac{n-r+1}{2n}\cdot\frac{1}{c_{n,r-1}}\Big).
        \end{equation*}
        Using \eqref{eq:cnr}, this simplifies to
        \begin{align*}\label{eq:Jc2-nn}
            \frac{(n-r-3)(n-r-1)(n-r+1)}{(n+r+5)(n+r+1)}&+(n-r-1)\leq\\[5pt]&\leq\frac{(n-r-1)(n-r+1)(n-r+3)}{(n+r+5)(n+r+3)}+\frac{(n-r+1)(n+r-1)}{(n+r+5)}.
        \end{align*}
     After combining the fractions, this takes the form
        \[0\leq16r^3+48r^2+32r\]
        which clearly holds.

Finally, when $r= n-2$, then due to $J_n(r+3)$ being 0, \eqref{eq:J_{n+1}(r+3)} reduces to
	    \[
      J_{n+1}(n)=\frac{1}{2n}J_n(n-1).
        \]
Therefore, using \eqref{eq:Jnr-recursion} for the term $J_{n+1}(n-2)$, \eqref{eq:proof-Jn+1-2} is seen to be equivalent to
\begin{equation}\label{jnrr}
\frac 1 {2n} J_n(n-1) \leq c_{n+1, n-2} \Big( \frac {2n -1}{2n}  J_n(n-1) + \frac 3 {2 n } J_n(n-3) \Big).
\end{equation}
The induction hypothesis for the pair $(n, n-3)$ implies that 
\[
\frac 1 {c_{n, n-3}} J_n(n-1) \leq J_n(n-3).
\]
Therefore, \eqref{jnrr} follows from the inequality
        \begin{equation*}
            1\leq c_{n+1,n-2}\Big({2n-1}+\frac{3}{c_{n,n-3}}\Big),
        \end{equation*}
       which, after substituting \eqref{eq:cnr}, simplifies to
\[
0 \leq 8 n^2  - 20 n + 3.
\]
As this holds for every $n \geq 3$, the proof is complete. \qedhere
        
    \end{proof}

    \begin{proof}[Proof of Corollary~\ref{cor:ineq}.]
	
	    The inequality can be easily confirmed in the cases of $n=2,\,3$ based on Table~\ref{tab:J2-8(0)}. Henceforth we suppose that $n\geq4$. Due to {\eqref{eq:Jn0}}, {\eqref{eq:Jnr-recursion}} and \eqref{eq:Jeq} we have 
\begin{equation}\label{eq:J(n+2)(0)}
\begin{split}
J_{n+2}(0)&=\frac{n+2}{n+1}J_{n+1}(1)=\\
&=\frac{(n+2)^2}{2n(n+1)}J_n(2)+\frac{n+2}{2(n+1)}J_n(0) \leq \\
&\leq \left( \frac{(n+2)^2}{2n(n+1)} \cdot c_{n,0} + \frac{n+2}{2(n+1)} \right)J_n(0)=\\
&= \frac{(n+2)(n^2 +2n -2)}{n (n+1)(n+4)}J_n(0).
\end{split}
\end{equation}
Therefore, \eqref{eq:ineq-Jn} is implied by the inequality
\[
\frac{(n+2)(n^2 +2n -2)}{n (n+1)(n+4)} < \frac{n+2}{n+3}
\]     
which holds for every $n \geq 1$. 
	\end{proof}
Finally, we prove the corresponding estimate for Eulerian numbers. 

    \begin{proof}[Proof of Proposition~\ref{thm:Lesieur-jobb}.]
	
	    Suppose that $r$ is an integer s.t. $r\geq-1$ and $n+r$ is even. According to equation {\eqref{eq:Jnr-Euler}} and Theorem~\ref{thm:ratio} we have
	    \[A\Big(n-1,\frac{n+r}{2}+1\Big)\leq c_{n,r}A\Big(n-1,\frac{n+r}{2}\Big).\]
            The symmetry property \eqref{eq:Euler-symm} 
            leads to the inequality
	    \[A\Big(n-1,\frac{n-r}{2}-1\Big)\leq c_{n,r}A\Big(n-1,\frac{n-r}{2}\Big).\]
	    Letting $m=n-1$ and $l=\dfrac{n-r}{2}$ implies {\eqref{eq:Les-jobb}}.
     \end{proof}

In order to demonstrate that \eqref{eq:Les-jobb} is indeed stronger than \eqref{eq:Lesieur} one has to prove that 
	    \begin{equation*}\label{eq:ineq-cL}
	        \frac{(l-1)l(l+1)}{(m-l+1)(m-l+2)(m-l+3)}<\Big(\frac{m-l}{m-l+2}\Big)^{m-2l+2}
	    \end{equation*}
     holds for each $l\geq2$ and $m\geq 2l-1$. By introducing $\mu=m-l+2$ (note that $\mu \geq l+1$), this transforms to  
	    \begin{equation}\label{eq:ineq-cL2}
	        \frac{(l-1)l(l+1)}{(\mu-1)\mu(\mu+1)}\cdot\Big(\frac{\mu-2}{\mu}\Big)^{l}<\Big(\frac{\mu-2}{\mu}\Big)^{\mu}.
	    \end{equation}
     Note that the right hand side is strictly monotone increasing in $\mu$. 

     We will prove that for any fixed $l\geq2$, the left hand side of \eqref{eq:ineq-cL2} is strictly monotone decreasing in $\mu$ for $\mu \geq l+2$. Indeed, its derivative in $\mu$ is given by
        \[-\frac{(l-1)l(l+1)}{(\mu-2)(\mu-1)^2\mu^2(\mu+1)^2}\cdot\Big(\frac{\mu-2}{\mu}\Big)^l\cdot(3\mu^3-2l\mu^2-6\mu^2-\mu+2l+2)\]
which is easily checked to be negative in the specified domain.

 Therefore, it suffices to verify \eqref{eq:ineq-cL2} for the cases $\mu=l+1,l+2$. These lead to the trivial inequalities  $\frac{l-1}{l+2}<\frac{l-1}{l+1}$ and $ \frac{l-1}{l+3}<\frac{l}{l+2}$, respectively.

\section{Main diagonal sections are strictly locally maximal}\label{sec:localmax}
By utilizing our combinatorial estimate on $J_n(r)$ stated in Theorem~\ref{thm:ratio}, we prove one of the core results of the paper stating that main diagonal sections of $Q_n$ are strictly locally maximal, except for the $3$-dimensional case.

\begin{proof}[Proof of Theorem~\ref{thm:localmax}]
To start with, note that Proposition~\ref{thm:Equ-2.12} and \eqref{eq:Jn0} imply that $\mbf d_{n}$ is a critical direction, hence we may apply the properties listed after  Proposition~\ref{thm:Equ-2.12}.

We will show that the function $\sigma(\vv)$ has a strict local maximum at $\vv = \mbf d_{n}$ subject to the constraint $|\vv| =1$. 
This is a constrained local optimization problem which can be solved by studying  the bordered Hessian matrix, i.e. the Hessian of the Lagrange function $\Lambda(\vv)$ defined by \eqref{eq:lagrangefunction}.

Since $\Lambda(\mbf v)=\sigma(\mbf v) + \frac {\sigma(\mbf v)}{2} \cdot (\abs{\mbf v}^2-1)$ because of \eqref{eq:lagrangefunction} and \eqref{eq:lagrangempl}, the bordered Hessian matrix is given by 
\begin{equation}\label{eq:borderedHessian}
H(\Lambda(\mbf v))=\begin{bNiceArray}{*{5}{c}}[margin,cell-space-limits=3pt,columns-width=1cm]
            0&2v_1&2v_2&\ldots&2v_n\\
            2v_1&\Block{4-4}{\dfrac{\partial^2\sigma}{\partial v_j \partial v_k}(\mbf v)+\sigma(\mbf v)\cdot\begin{cases}
                0,&\text{if}\ j\neq k\\
                1,&\text{if}\ j=k                
            \end{cases}}&&&\\
            2v_2&&&&\\
            \vdots&&&&\\
            2v_n&&&&
        \end{bNiceArray}
\end{equation}
Based on \eqref{eq:pd}, the entries of $H(\Lambda(\mbf v))$ apart from the first row and column may be calculated by
\begin{equation}\label{eq:betavdef}
   \beta_k(\vv) =
    \frac 1 \pi \int_{-\infty}^\infty\prod_{i\neq k}\sinc (v_i t)\cdot\bigg(\frac{2}{v_k^2}\Big(\sinc (v_kt)-\cos (v_kt)\Big)-\frac{(v_kt)^2}{v_k^2}\sinc (v_kt)+\sinc (v_kt)\bigg)\dd t
\end{equation}   
      along the diagonal $j=k$, and
\begin{equation}\label{eq:gammavdef}
    \gamma_{j,k}(\vv)   = \frac 1 \pi \int_{-\infty}^\infty\prod_{i\neq j,k}\sinc (v_it)\cdot\frac{\cos (v_jt)-\sinc (v_jt)}{v_j}\cdot\frac{\cos (v_kt)-\sinc (v_kt)}{v_k}\dd t
\end{equation}
        for the off-diagonal entries, i.e. $j\neq k$.

Consider now  the main diagonal direction $\mbf{d}_n = \frac 1{\sqrt{n}} \one{n}$, and let $H_m$ denote the principal minor of $H(\Lambda(\mbf{d}_n))$ of order $m$ for $m=3,\ldots,n$.  According to  \cite[Theorem 3.9.14.]{Moskowitz-funsev}), if $(-1)^{m-1}H_m>0$ for all $m=3,\ldots,n$, then $\sigma(.)$ has a strict local maximum on the constraint set $S^{n-1}$ at~$\mbf{d}_n$. 

 Substituting $\mbf v=\mbf{d}_n$ in \eqref{eq:betavdef} and \eqref{eq:gammavdef}, integrating by substitution for $v_k t$, applying the identity $\sin^2t=\frac{1-\cos(2t)}{2}$ and  employing \eqref{eq:Jnr-recursion} repeatedly  results in the formulae

  \begin{align*}
            \beta_k(\mbf{d}_n)
            &=
            \frac{\sqrt n}{\pi} \cdot \int_{-\infty}^\infty\Big(\sinc^{n-1}t\Big) \cdot\Big(2n(\sinc t-\cos t)-n t^2\sinc t+\sinc t\Big)\dd t=\\[5pt]&=
            {\sqrt n} \cdot \bigg((2n+1)J_n(0)-2n J_{n-1}(1)-\frac n2 J_{n-2}(0)+\frac n2 J_{n-2}(2)\bigg)=\\[5pt]&=
            {\sqrt n} \cdot \bigg(J_{n-2}(0)\cdot\Big(\frac{(2n+1)n}{2(n-1)}-n-\frac n2\Big) + J_{n-2}(2)\cdot\Big(\frac{(2n+1)n^2}{2(n-2)(n-1)}-\frac{n^2}{(n-2)}+\frac n2\Big)\bigg)=\\[5pt]&=
            \frac{n^{\frac32}}{2 (n-1)} \cdot \Big((4-n)J_{n-2}(0)+\frac{n^2+2}{n-2}J_{n-2}(2)\Big)
        \end{align*}
        
for any $k=1, \ldots, n$, and  
        \begin{align*}
           \gamma_{j,k}(\mbf{d}_n)
            &=
            \frac{n^{\frac32}}{\pi}\cdot\int_{-\infty}^\infty\Big(\sinc^{n-2}t\Big) \cdot \Big(\cos^2t-2\sinc t\cdot\cos t+\sinc^2t\Big)\dd t=\\[5pt]&=
             {n^{\frac32}}\cdot\Big(J_n(0)-2J_{n-1}(1)+\frac12J_{n-2}(0)+\frac12J_{n-2}(2)\Big)=\\[5pt]&=
            {n^{\frac32}}\cdot\bigg(J_{n-2}(2)\cdot\Big(\frac{n^2}{2(n-2)(n-1)}-\frac{n}{n-2}+\frac12\Big)+J_{n-2}(0)\cdot\Big(\frac{n}{2(n-1)}-1+\frac12\Big)\bigg)=\\[5pt]&=
             \frac{n^{\frac32}}{ 2(n-1)} \cdot  \big(J_{n-2}(0)-J_{n-2}(2)\big)
        \end{align*}
for $j \neq k$, $1 \leq j,k \leq n$.      
        Therefore, \eqref{eq:borderedHessian} yields that $ H(\Lambda(\mbf{d}_n))$ is of the form
        \begin{equation*}\label{eq:Hessian-a}
            H(\Lambda(\mbf{d}_n))=\begin{bNiceMatrix}
            0&\alpha&\alpha&\Cdots&\alpha\\
            \alpha&\beta&\gamma&\Cdots&\gamma\\
            \alpha&\gamma&\beta&\Cdots&\gamma\\
            \Vdots&\Vdots&\Vdots&\Ddots&\Vdots\\
            \alpha&\gamma&\gamma&\Cdots&\beta
            \end{bNiceMatrix}_{(n+1)\times(n+1)}
        \end{equation*}
        where
        \begin{equation}\label{eq:abg}
        \begin{gathered}
            \alpha=\frac{2}{\sqrt{n}},\quad \beta=\Big(  \frac{n^{\frac32}}{ 2(n-1)}  \Big)^{n-1}\cdot\Big((4-n)J_{n-2}(0)+\frac{n^2+2}{n-2}J_{n-2}(2)\Big),\\\text { and }\quad \gamma=\Big(  \frac{n^{\frac32}}{ 2(n-1)}  \Big)^{n-1}\cdot\Big(J_{n-2}(0)-J_{n-2}(2)\Big).
        \end{gathered}
        \end{equation}
        Thus,  all principal minors $H_m$ of $ H(\Lambda(\mbf{d}_n))$ have the same form, namely
        \begin{equation}\label{eq:H-det}
            H_m=\begin{vNiceMatrix}
            0&\alpha&\alpha&\Cdots&\alpha\\
            \alpha&\beta&\gamma&\Cdots&\gamma\\
            \alpha&\gamma&\beta&\Cdots&\gamma\\
            \Vdots&\Vdots&\Vdots&\Ddots&\Vdots\\
            \alpha&\gamma&\gamma&\Cdots&\beta
            \end{vNiceMatrix}_{m\times m}.
        \end{equation}
        Subtracting the second row from the ones below, expanding the resulting determinant along the first column, and finally adding the sum of all other columns of the remaining determinant to the first one results in an upper triangular matrix, hence 
 \[H_m=(-\alpha)\cdot\begin{vNiceMatrix}
            (m-1)\alpha&\alpha&\alpha&\Cdots&\alpha\\
            0&\delta&0&\Cdots&0\\
            0&0&\delta&\Cdots&0\\
            \Vdots&\Vdots&\Vdots&\Ddots&\Vdots\\
            0&0&0&\Cdots&\delta
        \end{vNiceMatrix}_{(m-1)\times (m-1)}\hspace{-1.25cm}=-(m-1)\alpha^2\delta^{m-2}\]
    with  $\delta=\beta-\gamma$. Therefore it suffices to show that $\delta<0$. For $n=4$ and $n = 5$, a direct calculation based on \eqref{eq:abg} yields that $\delta = -1$ and $\delta = - \frac 1 4$, respectively (we also remark that in the $n=3$ case one obtains $\delta = 0$). For $n \geq 6$, note that according to Theorem~\ref{thm:ratio},
    \[
    J_{n-2}(2) \leq \frac{n-4}{n+2} J_{n-2}(0),
    \]
       hence, by \eqref{eq:abg},
        \begin{align*}
        \delta&=\beta-\gamma=  \\
        &= \Big(  \frac{n^{\frac32}}{ 2(n-1)}  \Big)^{n-1}\cdot\Big((3 - n)J_{n-2}(0)+\frac{n(n+1)}{n-2}J_{n-2}(2)\Big)\leq
        \\
        &\leq  \Big(  \frac{n^{\frac32}}{ 2(n-1)}  \Big)^{n-1}\cdot\Big(- \frac{12}{n^2 - 4} J_{n-2}(0)\Big) < 0.
        \qedhere
        \end{align*}
\end{proof}

\section{Existence of non-diagonal critical sections}\label{sec:exist-non-diag}

This section is devoted to the proof of Theorem~\ref{thm:nondiag}, for which we present two approaches. The first one is only sketched below. 

\medskip

Consider the function $\sigma(\vv)$ on $S^{n-1}$. Theorem~\ref{thm:localmax} states that $\sigma(\vv)$ has a strict local maximum at $\vv = \mbf d_n$. On the other hand, the result of Ball~\cite{Ball86} shows that $\sigma(\vv)$ has a strict global maximum at $\vv=\mbf d_{n, 2}$ with $\sigma(\mbf d_{n, 2}) > \sigma(\mbf d_n)$. Let $\Gamma$ be the shorter great arc of $S^{n-1}$ connecting $\mbf d_n$ and $\mbf d_{n, 2}$ -- then $\Gamma$ consists of vectors of the form 
\begin{equation}\label{eq:vadef}
\vv_{n,2}(a) := (a,a,b, \ldots, b) \in S^{n-1}
\end{equation}
where $a \in \big [\frac 1 {\sqrt{n}}, \frac 1 {\sqrt{2}} \big]$. Let 
\begin{equation}\label{eq:sigma-hat}
\widehat{\sigma} (a) := \sigma \big(\vv_{n,2}(a)\big)
\end{equation}
be the restriction of $\sigma$ onto $\Gamma$. On the interval $\big [\frac 1 {\sqrt{n}}, \frac 1 {\sqrt{2}} \big]$, the function $\widehat{\sigma} (a)$ has a strict maximum at $\frac 1 {\sqrt{2}} $, while at $\frac 1 {\sqrt{n}}$ it has a strict local maximum. Because of \eqref{eq:pd},  $\widehat{\sigma} (a)$ is differentiable on the interval, hence it must have a local minimum at some $\xi_n \in \big (\frac 1 {\sqrt{n}}, \frac 1 {\sqrt{2}} \big)$. Using that $Q_n$ is symmetric with respect to the reflection over the line spanned by $\mbf d_{n, 2}$ in $\R^n$, one may show that $\vv(\xi_n)$ is a critical point of $\sigma$ on $S^{n-1}$.

\medskip

We decided to present the second proof below as it provides information about a wider class of normal vectors, it allows for the numerical calculation of the value of $\xi_n$, moreover, it is independent of the proof of maximality of $2$-diagonal sections  by Ball~\cite{Ball86}. The approach is based on the characterization result of \cite{Ambrus}, see Proposition~\ref{thm:Equ-2.12}.

\begin{proof}[Proof of Theorem~\ref{thm:nondiag}]
As a generalization of \eqref{eq:vadef}, we are going  to study a special class  of unit vectors which may be written in the form
\begin{equation*}\label{eq:vdef}
     \mbf{v}_{n,k}(a):=\begin{pNiceMatrix}a,&\Ldots&a,&b,&\Ldots&b\CodeAfter\UnderBrace[shorten,yshift=0pt]{1-1}{1-3}{k}\UnderBrace[shorten]{1-4}{1-6}{n-k}\end{pNiceMatrix}\in S^{n-1}
\end{equation*}
	 \vspace{5 pt}
 
  \noindent
	where $2\leq k\leq n-2$ and $a\in I_k:=\big[\frac{1}{\sqrt n},\frac{1}{\sqrt{k}}\big]$, furthermore $b$ is defined by
	\begin{equation}\label{eq:b_nka}
	    b:=b_{n,k}(a)=\sqrt{\dfrac{1-ka^2}{n-k}}.
	\end{equation}
  Here and later on we will always interpret $b$ as a function of $a$, $n$ and $k$, unless stated otherwise.
  
    Since $\mbf v_{n,k}(a)$ has at most two different coordinates, Proposition~\ref{thm:Equ-2.12}. implies that $\mbf v_{n,k}(a)$ is a critical direction if and only if
    $F_{n,k}(a)=0$, where
    \begin{align}\label{eq:F_nk}
	\begin{split}
		F_{n,k}(a):&=
		\frac{1}{\pi(1-a^2)}\int_{-\infty}^\infty\sinc^{n-k}{(bt)}\cdot\sinc^{k-1}{(at)}\cdot\cos{(at)}\dd{t}-\\[6pt]&-
		\frac{1}{\pi(1-b^2)}\int_{-\infty}^\infty\sinc^{n-k-1}{(bt)}\cdot\sinc^k{(at)}\cdot\cos (bt)\dd{t},
	\end{split}
    \end{align}
	defined on $I_k$. We will show the existence of $\xi_n \in \big (\frac 1 {\sqrt{n}}, \frac 1 {\sqrt{2}} \big)$ for which $F_{n,2}(\xi_n) = 0$. This suffices as $\vv_{n,2}(\xi_n)$ cannot be diagonal, neither can it have any 0 coordinates.
 
 The argument is divided to the following three lemmata whose proof is postponed to the end of the section. 
	
	\begin{lemma}\label{lem:1}
	    For each $2 \leq k \leq n-2$, $\dfrac{1}{\sqrt k}$ and $ \dfrac{1}{\sqrt n}$ are both zeros of $F_{n,k}$.
	\end{lemma}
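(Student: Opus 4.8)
The plan is to show that $\frac{1}{\sqrt{k}}$ and $\frac{1}{\sqrt{n}}$ are the two values of $a$ at which the vector $\mbf v_{n,k}(a)$ degenerates into a diagonal direction, and then invoke the already-established fact (via Proposition~\ref{thm:Equ-2.12} together with \eqref{eq:Jn0}) that these diagonal directions are critical. First I would observe that at $a = \frac{1}{\sqrt{n}}$ one has $ka^2 = \frac{k}{n} < 1$, and plugging into \eqref{eq:b_nka} gives $b^2 = \frac{1 - k/n}{n-k} = \frac{1}{n}$, so $b = \frac{1}{\sqrt{n}} = a$; hence $\mbf v_{n,k}\big(\tfrac{1}{\sqrt{n}}\big) = \frac{1}{\sqrt{n}}\one{n} = \mbf d_n$. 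Symmetrically, at $a = \frac{1}{\sqrt{k}}$ we get $ka^2 = 1$, so $b = 0$, and $\mbf v_{n,k}\big(\tfrac{1}{\sqrt{k}}\big) = \big(\tfrac{1}{\sqrt{k}}\one{k}, \zero{n-k}\big) = \mbf d_{n,k}$, a $k$-diagonal direction.

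Next I would argue directly from the definition of $F_{n,k}$ in \eqref{eq:F_nk} that it vanishes at these two endpoints. At $a = \frac{1}{\sqrt{n}}$ (so $b = a$), the two integrals in \eqref{eq:F_nk} become
\[
\frac{1}{\pi(1-a^2)}\int_{-\infty}^\infty \sinc^{n-1}(at)\cos(at)\dd t
\quad\text{and}\quad
\frac{1}{\pi(1-b^2)}\int_{-\infty}^\infty \sinc^{n-1}(at)\cos(bt)\dd t,
\]
which are identical since $a = b$; hence $F_{n,k}\big(\tfrac{1}{\sqrt{n}}\big) = 0$. At $a = \frac{1}{\sqrt{k}}$ (so $b = 0$, and $\sinc(bt) = \sinc 0 = 1$, $\cos(bt) = 1$), the first integral reads $\frac{1}{\pi(1-a^2)}\int \sinc^{k-1}(at)\cos(at)\dd t$ and the second reads $\frac{1}{\pi}\int \sinc^k(at)\dd t$ — note $1 - b^2 = 1$ here. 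After the substitution $u = at$ (legitimate since $a = \tfrac{1}{\sqrt k} \neq 0$), these become, respectively, $\frac{a}{\pi(1-a^2)}\int \sinc^{k-1}u \cos u \dd u$ and $\frac{a}{\pi}\int \sinc^{k}u\dd u$; the claim $F_{n,k}\big(\tfrac{1}{\sqrt k}\big) = 0$ then reduces to the identity
\[
\frac{1}{1-a^2}\cdot\frac{1}{\pi}\int_{-\infty}^\infty \sinc^{k-1}u\cos u\dd u = \frac{1}{\pi}\int_{-\infty}^\infty \sinc^{k}u\dd u
\]
with $a^2 = \tfrac{1}{k}$, i.e. $\frac{k}{k-1}\,\pi^{-1}\!\int \sinc^{k-1}u\cos u\dd u = \pi^{-1}\!\int \sinc^k u\dd u$. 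The right side is $J_k(0)$ and, recognizing $\pi^{-1}\!\int \sinc^{k-1}u\cos u\dd u$ as $J_{k-1}(1)$ essentially by \eqref{eq:Jnr} (writing $\sinc^{k-1}u\cos u = \sinc^{k-1}u\cos(1\cdot u)$... though one must be slightly careful: this is $J_{k-1}(1)$ only in the sense of the integral with $n$ replaced by $k-1$ and $r=1$), the identity is exactly \eqref{eq:Jn0}: $J_k(0) = \frac{k}{k-1}J_{k-1}(1)$. Alternatively, and perhaps more cleanly, I would avoid this case-analysis of the integrals and instead argue that since $\mbf v_{n,k}\big(\tfrac 1{\sqrt n}\big) = \mbf d_n$ and $\mbf v_{n,k}\big(\tfrac1{\sqrt k}\big) = \mbf d_{n,k}$ are both critical directions of $\sigma$ (the former by the remark opening the proof of Theorem~\ref{thm:localmax}, the latter by Pournin's result or again by Proposition~\ref{thm:Equ-2.12}), and since $F_{n,k}(a) = 0$ is precisely the criticality condition furnished by Proposition~\ref{thm:Equ-2.12} for vectors with at most two distinct coordinates, the vanishing $F_{n,k}\big(\tfrac1{\sqrt n}\big) = F_{n,k}\big(\tfrac1{\sqrt k}\big) = 0$ follows immediately.

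The main thing to watch for is the degenerate case $k = 2$ at the endpoint $a = \frac{1}{\sqrt{2}}$, where $1 - a^2 = \frac{1}{2} \neq 0$ causes no trouble, but one should double-check that the formula \eqref{eq:F_nk} and Proposition~\ref{thm:Equ-2.12} genuinely apply when $b = 0$ (i.e. when the section is parallel to coordinate axes) — in that regime $\mbf d_{n,k}$ with $k < n$ still satisfies the criticality characterization since appending zeros to the critical direction $\mbf d_k$ of $Q_k$ yields a critical direction of $Q_n$, a fact noted in the introduction. A second minor point: to write $\pi^{-1}\!\int\sinc^{k-1}u\cos u\dd u = J_{k-1}(1)$ one needs $k - 1 \geq 2$, i.e. $k \geq 3$, for convergence in the naive sense; for $k = 2$ the integral $\pi^{-1}\!\int \sinc u\cos u\dd u$ still equals $J_1(1)$ as a (conditionally convergent / principal-value) integral, and the identity $J_2(0) = 2J_1(1)$ holds with $J_2(0)=1$, $J_1(1) = \frac12$, so nothing breaks. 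I expect the argument to be short once the degeneration of $\mbf v_{n,k}(a)$ at the two endpoints is spelled out; the only real obstacle is bookkeeping the edge cases cleanly, which the second (criticality-based) route sidesteps entirely.
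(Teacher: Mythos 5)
Your proposal is correct, and your ``alternative'' route at the end is in fact the paper's entire proof: the paper simply observes that $\mbf v_{n,k}\big(\tfrac1{\sqrt n}\big)=\mbf d_n$ and $\mbf v_{n,k}\big(\tfrac1{\sqrt k}\big)=\mbf d_{n,k}$ are diagonal, hence critical (Proposition~\ref{thm:Equ-2.12} together with \eqref{eq:Jn0} for the main diagonal, plus the appending-zeros remark preceding Theorem~\ref{thm:nondiag} for $k<n$), so the criticality characterization forces $F_{n,k}=0$ at both endpoints. Your primary, computational route --- substituting $b=a$ (where the two integrals in \eqref{eq:F_nk} coincide termwise) and $b=0$ (where, after the substitution $u=at$, the vanishing reduces to $J_k(0)=\tfrac{k}{k-1}J_{k-1}(1)$, i.e.\ exactly \eqref{eq:Jn0}) --- is a legitimate unwinding of the same fact and rests on the same identity, so the two arguments are morally identical; what your direct computation buys is independence from the equivalence ``$\mbf v_{n,k}(a)$ critical $\iff F_{n,k}(a)=0$'' and from the appending-zeros remark, at the cost of the bookkeeping you correctly flag (conditional convergence of $\int\sinc(at)\cos(at)\dd t$ when $b=0$ and $k=2$, and the $k=2$ instance $J_2(0)=2J_1(1)$ lying outside the stated range of \eqref{eq:Jn0}, both of which you handle properly), whereas the paper's version is a one-liner.
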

In the next lemma we will consider the right-hand derivative of $F_{n,k}(.)$ at $\frac{1}{\sqrt{n}}$. In order to ease notation, this will be denoted by $F_{n,k}'\Big(\frac{1}{\sqrt{n}}\Big)$. We note that there is no difficulty in extending the domain of $F_{n,k}(.)$ hence this simplification is well justified.

  \begin{lemma}\label{lem:2}
	    For each $4\leq k\leq n-2$, $F_{n,k}$ is differentiable on the interval $I_k$. In the case of $k=2,3$, $F_{n,k}$ is differentiable on every compact subinterval of $I_k\setminus\big\{\frac{1}{\sqrt k}\big\}$. Moreover in both cases  \[F_{n,k}'\Big(\frac{1}{\sqrt{n}}\Big)<0.\]
	\end{lemma}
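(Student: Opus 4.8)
My plan is to first deal with the differentiability statement and then compute the sign of the right-hand derivative at $\tfrac{1}{\sqrt n}$.

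\smallskip
\noindent\textbf{Differentiability.}
I would write $F_{n,k}(a)$ as in \eqref{eq:F_nk}, a difference of two expressions of the form $\frac{1}{\pi(1-x^2)}\int_{-\infty}^\infty g(t)\dd t$, where each integrand is a product of powers of $\sinc(at)$, $\sinc(bt)$ and a single cosine factor. Since $b=b_{n,k}(a)$ from \eqref{eq:b_nka} is a smooth function of $a$ on the interior of $I_k$ (and one-sided smooth at the endpoints, with $b$ vanishing only at $a=\tfrac1{\sqrt k}$), the prefactors $\tfrac{1}{1-a^2}$ and $\tfrac{1}{1-b^2}$ are smooth on $I_k$ except that the second one behaves like $\tfrac{1}{1-b^2}$, which is still smooth since $b<1$ throughout. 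The only real issue is justifying differentiation under the integral sign: $|\sinc(at)|\le 1$, $|\sinc(bt)|\le 1$, and differentiating in $a$ brings down factors that are linear in $t$ times products of $\sinc$'s. When the total number of $\sinc$ factors remaining (of either argument) after differentiation is at least $2$, the integrand is dominated by an integrable function uniformly for $a$ in a compact set, so the dominated convergence argument for differentiating under the integral applies. Counting factors: in the first integral of \eqref{eq:F_nk} there are $n-k$ copies of $\sinc(bt)$ and $k-1$ copies of $\sinc(at)$; in the second, $n-k-1$ copies of $\sinc(bt)$ and $k$ copies of $\sinc(at)$. Differentiating in $a$ (which also moves $b$) loses at most one $\sinc$ factor and multiplies by a factor $O(t)$, plus it may turn a $\sinc$ into a $\cos$, which is only better for decay only when enough $\sinc$ factors remain. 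For $k\ge 4$ we always keep at least $k-2\ge 2$ factors of $\sinc(at)$, so the dominating function is integrable on all of $I_k$; for $k=2,3$ near $a=\tfrac1{\sqrt k}$ we have $b\to 0$ so the $\sinc(bt)$ factors degenerate to $1$ and we may lose integrability, which is exactly why we restrict to compact subintervals of $I_k\setminus\{\tfrac1{\sqrt k}\}$ there. I expect this counting-plus-dominated-convergence bookkeeping to be the routine but slightly delicate part.

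\smallskip
\noindent\textbf{The value of $F_{n,k}'(\tfrac1{\sqrt n})$.}
At $a=\tfrac1{\sqrt n}$ we have $a=b=\tfrac1{\sqrt n}$, so $\mbf v_{n,k}(a)=\mbf d_n$ and both terms of $F_{n,k}$ coincide; hence $F_{n,k}(\tfrac1{\sqrt n})=0$ (consistent with Lemma~\ref{lem:1}). Differentiating \eqref{eq:F_nk} in $a$ at this point, I would use $b'=b_{n,k}'(a)$, computed from \eqref{eq:b_nka}: at $a=\tfrac1{\sqrt n}$ one gets $b'(\tfrac1{\sqrt n})=-\tfrac{k}{n-k}$. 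The derivative of $F_{n,k}$ is then a linear combination of integrals of the form $\int \sinc^{n-1}t\cdot(\text{polynomial in }t)\cdot(\cos t\text{ or }\sinc t)\dd t$, obtained after the substitution $u=a t$ (equivalently $u=bt$) as in the proof of Theorem~\ref{thm:localmax}. Using $\sin^2 t=\tfrac{1-\cos 2t}{2}$ and the recursion \eqref{eq:Jnr-recursion} repeatedly — exactly the mechanism already exercised in Section~\ref{sec:localmax} — each such integral collapses to a linear combination of $J_{n-2}(0)$ and $J_{n-2}(2)$ with explicit rational coefficients in $n$ and $k$. So I expect an identity of the shape
\[
F_{n,k}'\Big(\tfrac1{\sqrt n}\Big)=P_{n,k}\cdot\Big(A_{n,k}\,J_{n-2}(0)+B_{n,k}\,J_{n-2}(2)\Big)
\]
with $P_{n,k}>0$ a visible positive prefactor, and $A_{n,k},B_{n,k}$ explicit rationals. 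To conclude negativity, I would bound $J_{n-2}(2)$ above in terms of $J_{n-2}(0)$ using Theorem~\ref{thm:ratio} with $r=0$, i.e. $J_{n-2}(2)\le \tfrac{(n-4)(n-2)n}{n(n+2)(n+4)}J_{n-2}(0)=\tfrac{(n-4)(n-2)}{(n+2)(n+4)}J_{n-2}(0)$ (valid for $n-2\ge 4$), and also use the crude bound $0<J_{n-2}(2)<J_{n-2}(0)$ where convenient, so that the bracket becomes a single rational multiple of $J_{n-2}(0)>0$; then checking that rational is negative reduces to a polynomial inequality in $n$ (and $k$) that should hold for all $n\ge k+2\ge 6$ in the main range. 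The small cases $k=2,3$ and small $n$ not covered by Theorem~\ref{thm:ratio}'s hypothesis $n-2\ge 4$ I would verify directly from \eqref{eq:Jnr-expli}, just as Table~\ref{tab:ineq-J5k} handled the base case there.

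\smallskip
\noindent\textbf{Main obstacle.}
The conceptual steps are all in place — the reduction to $J_{n-2}(0),J_{n-2}(2)$ via \eqref{eq:Jnr-recursion} and the decay estimate of Theorem~\ref{thm:ratio} — so the genuine difficulty is purely computational: organizing the differentiation of \eqref{eq:F_nk} at $a=b=\tfrac1{\sqrt n}$ (there are two competing terms, each contributing through both the $\sinc$ exponents and the $\tfrac{1}{1-a^2},\tfrac{1}{1-b^2}$ prefactors, with $b'\ne 0$) cleanly enough that the coefficients $A_{n,k},B_{n,k}$ come out in a form where the final polynomial inequality in $n,k$ is transparently true. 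I would double-check the outcome numerically for a few $(n,k)$ pairs before trusting the algebra.
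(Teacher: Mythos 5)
Your plan follows essentially the same route as the paper: differentiate under the integral sign with an integrable majorant (the obstruction for $k=2,3$ being exactly the degeneration as $b\to0$), then evaluate the right-hand derivative at $a=b=\tfrac1{\sqrt n}$, where every term collapses to a multiple of some $J_m(\beta)$, reduce via \eqref{eq:Jnr-recursion}, and conclude negativity from the combinatorial decay estimate. Two corrections, though. First, in the majorant bookkeeping the dangerous factor is $\frac{\sinc(bt)-\cos(bt)}{b^2}\to\frac{t^2}{3}$, i.e.\ the $a$-derivative costs a factor $O(t^2)$, not $O(t)$; your conclusion (failure only for $k=2,3$ near $\tfrac1{\sqrt k}$, hence compact subintervals there) is the right one, but the counting as written does not quite justify it. Second, Theorem~\ref{thm:ratio} with $r=0$ gives $J_{n-2}(2)\le c_{n-2,0}\,J_{n-2}(0)$ with $c_{n-2,0}=\frac{n-4}{n+2}$, not $\frac{(n-4)(n-2)}{(n+2)(n+4)}$ as you wrote; the inequality you state is actually false (it contradicts the lower bound in \eqref{eq:LNestimate}). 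Fortunately the correct value suffices: the derivative comes out as a positive multiple of $\frac{n+1}{n}J_n(0)-J_{n-2}(0)$, equivalently of $\frac{n(n+1)}{2(n-1)(n-2)}J_{n-2}(2)-\frac{n-3}{2(n-1)}J_{n-2}(0)$, so an upper bound on $J_{n-2}(2)$ is indeed the right tool and $\frac{n-4}{n+2}<\frac{(n-3)(n-2)}{n(n+1)}$ closes the argument for $n\ge6$, with $n=4,5$ checked by hand as you propose. The paper packages this last step as Corollary~\ref{cor:ineq}, which applies uniformly for all $n\ge4$ and avoids the separate small cases; in substance the two conclusions are the same, since Corollary~\ref{cor:ineq} is itself derived from Theorem~\ref{thm:ratio} by exactly this reduction.
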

	
	\begin{lemma}\label{lem:3}
		For each $n\geq4$,  $F_{n,2}(a) \geq 0$ for every $a\in \big[ {\gamma_n}, \frac{1}{\sqrt2} \big] $ where $\gamma_n=\sqrt{\frac{n-2}{2n-3}}$. 
	\end{lemma}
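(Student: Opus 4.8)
The plan is to show that on the interval $\big[\gamma_n, \frac{1}{\sqrt 2}\big]$ the first integral in the definition~\eqref{eq:F_nk} of $F_{n,2}$ dominates the second one. With $k=2$, $F_{n,2}(a)$ is the difference of
\[
\frac{1}{\pi(1-a^2)}\int_{-\infty}^\infty \sinc^{n-2}(bt)\cdot \sinc(at)\cdot\cos(at)\dd t
\quad\text{and}\quad
\frac{1}{\pi(1-b^2)}\int_{-\infty}^\infty \sinc^{n-3}(bt)\cdot \sinc^2(at)\cdot\cos(bt)\dd t ,
\]
where $b=b_{n,2}(a)=\sqrt{\tfrac{1-2a^2}{n-2}}$. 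Note that on $\big[\gamma_n,\tfrac1{\sqrt2}\big]$ one has $a\ge \gamma_n$ while $b$ is small; in particular a short computation gives $b\le a$ throughout, and at the right endpoint $a=\tfrac1{\sqrt2}$ we have $b=0$, so the second integral vanishes and the first is strictly positive, giving $F_{n,2}(\tfrac1{\sqrt2})>0$. The idea is to convert both integrals into the parallel-section language via~\eqref{eq:s-int}: using $\sinc(at)\cos(at)=\tfrac12\big(\sinc(2at)+\ldots\big)$ type identities, or more directly the observation that $\int \prod\sinc(v_it)\cos(rt)\dd t$ is (up to the factor $|\vv|$) the parallel section function $s(\vv,\cdot)$ of the cube, one can rewrite
\[
\frac{1}{\pi}\int_{-\infty}^\infty \sinc^{n-2}(bt)\sinc(at)\cos(at)\dd t
= \frac{1}{\pi}\int_{-\infty}^\infty \sinc^{n-2}(bt)\sinc(at)\cos(at)\dd t ,
\]
recognizing the left integrand as $\prod \sinc(v_it)\cos(rt)$ for a suitable $(n-1)$-dimensional direction with one coordinate doubled. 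Concretely, the first integral is $\tfrac1a\, s\big(\uu,\tfrac{a}{2}\big)$ where $\uu=(a,b,\dots,b)\in\R^{n-1}$ has $|\uu|=\sqrt{a^2+(n-2)b^2}=\sqrt{1-a^2}$, and the second is $\tfrac1b\,s\big(\uu',\tfrac b2\big)$ with $\uu'=(a,a,b,\dots,b)\in\R^{n-1}$, $|\uu'|=\sqrt{1-b^2}$; so $F_{n,2}(a)\ge0$ becomes a clean inequality between two parallel sections of $Q_{n-1}$.

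The next step is to estimate these section volumes. For the numerator $s\big(\uu,\tfrac a2\big)$ we want a lower bound; for the subtracted term $s\big(\uu',\tfrac b2\big)$ an upper bound. Since $b$ is small on the relevant interval, the section $S(\uu',\tfrac b2)$ is close to the central section $\uu'^\perp$, and one can use the fact (following from the log-concavity of the parallel section function of a convex body, or from the Brunn--Minkowski-type monotonicity built into Hensley/Ball's estimates) that $s\big(\uu',\tfrac b2\big)\le s(\uu',0)=\sigma(\uu')$. Comparing $\sigma(\uu)$, $\sigma(\uu')$ and the shift by $\tfrac a2$ in the first term, and using that $a\ge\gamma_n$ makes $\tfrac a2$ large enough that the explicit polynomial formula~\eqref{eq:s-int}--\eqref{eq:Jnr-expli} for diagonal-type directions can be invoked, the inequality $F_{n,2}(a)\ge0$ should reduce, after clearing denominators $(1-a^2)$ and $(1-b^2)$ and substituting $b^2=\tfrac{1-2a^2}{n-2}$, to a polynomial inequality in $a$ and $n$. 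The choice $\gamma_n=\sqrt{\tfrac{n-2}{2n-3}}$ is presumably calibrated so that this polynomial inequality holds on $[\gamma_n,\tfrac1{\sqrt2}]$; one would verify it by checking it is true at $a=\gamma_n$ and that the relevant expression is monotone (or has controlled sign of derivative) up to $a=\tfrac1{\sqrt2}$.

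The main obstacle I expect is obtaining sharp enough two-sided control of the two section volumes so that their difference has a provable sign throughout the whole interval, rather than just near the endpoint $a=\tfrac1{\sqrt2}$: near $a=\gamma_n$ the shift $\tfrac a2$ and the value of $b$ are both of order $1$ relative to the coordinates, so crude bounds like $s(\uu',\tfrac b2)\le\sigma(\uu')$ may be too lossy and one may need the explicit Laplace--Pólya expansion~\eqref{eq:Jnr-expli} together with the ratio bound of Theorem~\ref{thm:ratio} to squeeze out the constant. A secondary technical point is justifying the manipulation of the integrals (absolute convergence is fine for $n\ge4$ since the integrand decays like $t^{-(n-2)}$, but the identification with $s(\uu,\cdot)$ must account for whether the relevant hyperplane is a facet, which it is not in the interior of $I_2$). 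Once the problem is polynomial, the remaining work is the routine verification alluded to above.
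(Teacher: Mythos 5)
There is a genuine gap. Your first step --- rewriting the two integrals in \eqref{eq:F_nk} as parallel section volumes of $Q_{n-1}$ via \eqref{eq:s-int} --- is the same starting point as the paper's proof (modulo a slip in the normalization: by \eqref{eq:s-int} the first integral equals $\tfrac{1}{\sqrt{1-a^2}}\,s\big((a,b,\ldots,b),\tfrac a2\big)$, not $\tfrac1a\,s(\cdot)$, so that $F_{n,2}(a)=\tfrac{1}{(1-a^2)^{3/2}}s_1(a)-\tfrac{1}{(1-b^2)^{3/2}}s_2(a)$). Everything after that is missing. You never produce any concrete lower bound for $s_1(a)$, and the upper bound you propose for $s_2(a)$ (log-concavity, $s\big(\uu',\tfrac b2\big)\leq\sigma(\uu')$) is, as you yourself suspect, too lossy near $a=\gamma_n$: there the left-hand term is of size roughly $\tfrac{1}{2a(1-a^2)}$ while a central-section bound for the non-diagonal direction $(a,a,b,\ldots,b)$ gives essentially the same size, so no sign can be extracted. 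Moreover, \eqref{eq:Jnr-expli} and Theorem~\ref{thm:ratio} concern the equal-coordinate (diagonal) direction only, so they cannot be ``invoked'' for these two mixed directions; the promised reduction to a polynomial inequality is asserted, not derived, and the role of the specific threshold $\gamma_n=\sqrt{\tfrac{n-2}{2n-3}}$ is left as a guess.

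What actually makes the lemma work, and what $\gamma_n$ encodes, is that $a\geq\gamma_n$ is equivalent to $\tfrac ab\geq n-2$, and this permits an \emph{exact} geometric evaluation of both sections by orthogonal projection. Projecting $S\big((a,b,\ldots,b),\tfrac a2\big)$ onto the facet $\{x_1=\tfrac12\}$ of $Q_{n-1}$ covers precisely half of that facet (because the constraint $0\leq\langle\mbf y,\one{n-2}\rangle\leq\tfrac ab$ is inactive on its upper side when $\tfrac ab\geq n-2$), giving $s_1(a)=\tfrac{\sqrt{1-a^2}}{2a}$; projecting $S\big((a,a,b,\ldots,b),\tfrac b2\big)$ onto the central $2$-diagonal section gives $s_2(a)=\tfrac{\sqrt{1-b^2}}{a}\big(1-\tfrac ba\int_{Q_{n-3}}\big|\tfrac12-\langle\mbf y,\one{n-3}\rangle\big|\dd{\mbf y}\big)$, every fiber being nonempty again because $\tfrac ba\leq\tfrac1{n-2}$. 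Bounding the last integral from below by $\tfrac12$ then reduces $F_{n,2}(a)\geq0$ to the explicit inequality $(n-2)(1-a^2)^2\geq(n-1)^2a^2(1-2a^2)$, which holds exactly when $a^2\geq\tfrac{n-2}{2n-3}$. Without this exact projection computation (or an equally sharp substitute for both $s_1$ and $s_2$), your sketch does not close.
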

    
   Note that the first  two statements hold for arbitrary $k$, while the third is proven  only for $k=2$. This is not merely a technical issue; based on numerical evidence we conjecture that there are no non-diagonal critical directions of the form $\vv_{n,k}(a)$ with $3 \leq k \leq n-2$, see Figure~\ref{fig:F6}.
	
	\begin{figure}[h!]
		\centering
		\begin{minipage}{.49\textwidth}
			\centering
			\includegraphics[width=.9\textwidth]{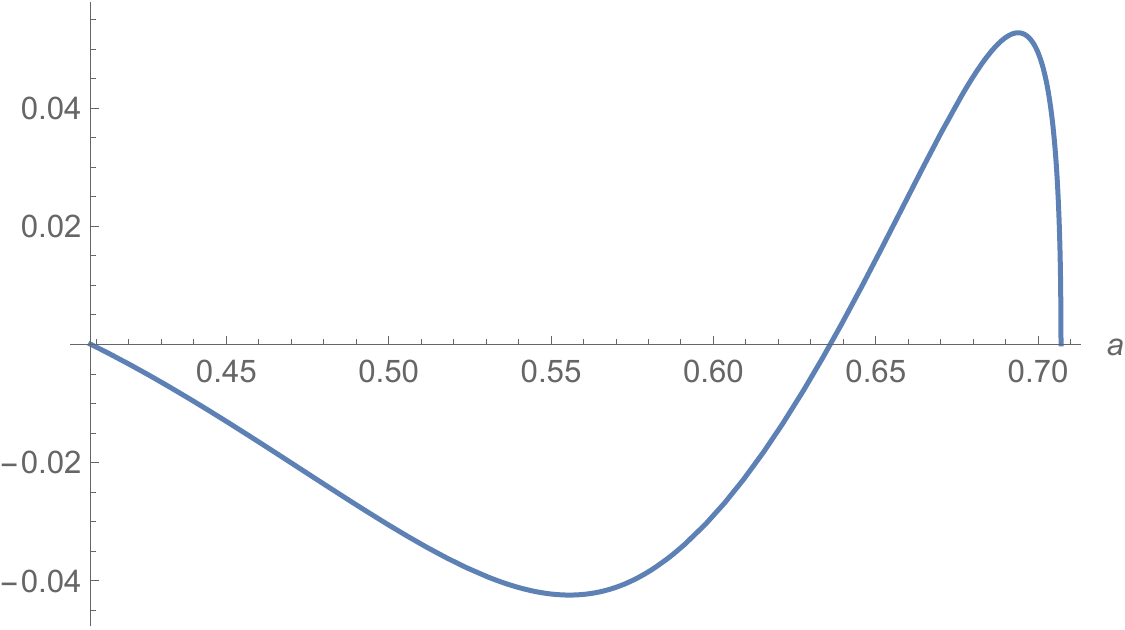}
		\end{minipage}
		\hfill
		\begin{minipage}{.49\textwidth}
			\centering
			\includegraphics[width=.9\textwidth]{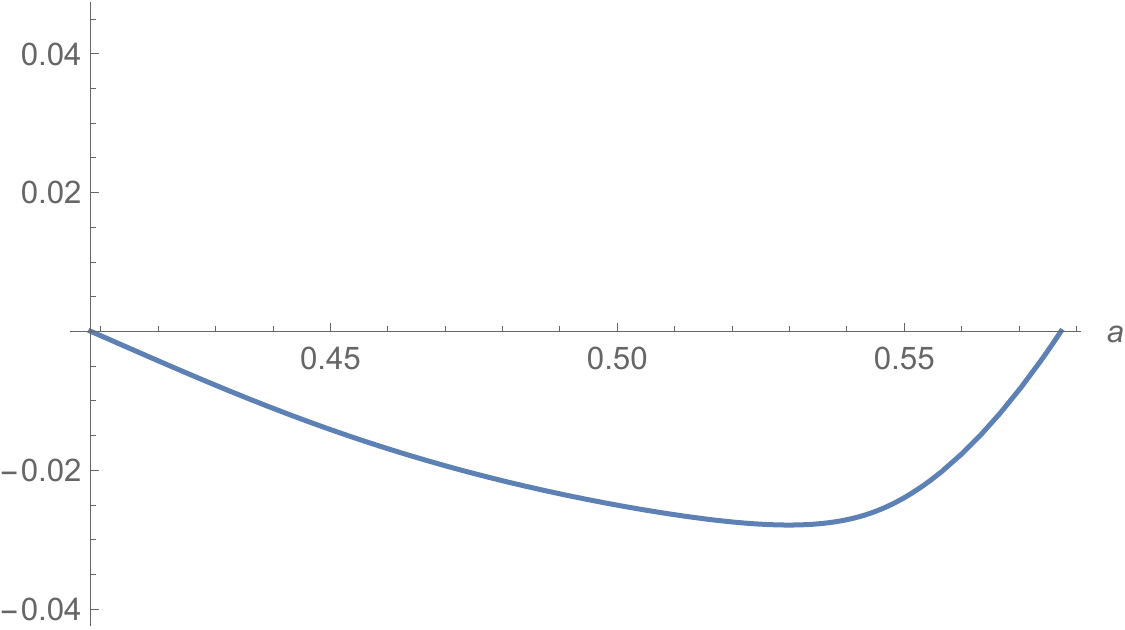}
		\end{minipage}
        \caption{Difference between the behavior of $F_{6,2}(a)$ and $F_{6,3}(a)$ }
        \label{fig:F6}
	\end{figure}

Once the above lemmata are established, the proof is immediate: Take $\gamma_n$ provided by Lemma~\ref{lem:3}. Then $F_{n, 2} ({\gamma_n}  ) \geq 0$, and $\gamma_n >\frac{1}{\sqrt{n}} $ as  $n \geq 4$. Moreover, acccording to Lemma~\ref{lem:2},  $F_{n, 2}$ is differentiable on the interval $\big[\frac{1}{\sqrt{n}}, {\gamma_n} \big]$ and for its right-hand derivative, $F_{n,2}'\big(\frac{1}{\sqrt{n}}\big)<0$ holds. Lemma~\ref{lem:1} yields that $F_{n,2}\big(\frac{1}{\sqrt{n}}\big)=0$, thus there exists some  $\eps>0$ with $F_{n,2}\big(\frac{1}{\sqrt{n}}+\eps\big) <0$ and $\frac{1}{\sqrt{n}}+\eps < \gamma_n$.
Applying the mean value theorem~\cite{Bolzano} on the interval $\big[ \frac{1}{\sqrt{n}}+\eps, {\gamma_n}  \big]$ yields the existence of $\xi_n$ with the prescribed property $F_{n,2}(\xi_n) = 0$. 
\end{proof}

Our proof only assures the existence of a nontrivial zero $\xi_n$. Based on numerical calculations  we suspect that there is only one suitable zero in the prescribed interval for each $n \geq 4$. Table~\ref{tab:n-xi} summarizes the numerical value of $\xi_n$ for small dimensions. For $n=4$, $\xi_4 = \sqrt{\frac25}$, which yields a non-diagonal critical direction parallel to $(1,1,2,2)$ \cite{Ambrus}; for larger dimensions, these values do not seem to follow such a nice pattern anymore.  
	
	\begin{table}[h!]
	\[\begin{NiceArray}{*{8}{c}}[hvlines-except-borders, cell-space-limits=5pt, columns-width = 44pt]
	    n&4&5&6&7&8&9&10\\
	    \xi_n&0.632455&0.634265&0.636071&0.636935&0.637520&0.637921&0.638219
	\end{NiceArray}	\]
		\caption{Numerical value of the zero $\xi_n$ in the dimensions $4\leq n\leq10$.}
        \label{tab:n-xi}
	\end{table}

\begin{proof}[Proof of Lemma~\ref{lem:1}]
The statement follows from the fact that all diagonal sections are critical, which is implied by Proposition~\ref{thm:Equ-2.12}, formula \eqref{eq:Jn0}, and the remark preceding Theorem~\ref{thm:nondiag}.
\end{proof}

\begin{proof}[Proof of Lemma~\ref{lem:2}]
For the integrand of $F_{n,k}(a)$ we introduce the notation
	\begin{equation}\label{eq:f_nk}
		f_{n,k}(a,t):=\frac{1}{1-a^2}\cdot\sinc^{n-k}{(bt)}\cdot\sinc^{k-1}{(at)}\cdot\cos{(at)}-\frac{1}{1-b^2}\cdot\sinc^{n-k-1}{(bt)}\cdot\sinc^k{(at)}\cdot\cos{(bt)}
	\end{equation}
	which is defined on $I_k\times\R$.   Due to the properties of the $\sinc$ function, $f_{n,k}(a,t)$ is continuous in both of its variables. Furthermore the function
	    \begin{equation}\label{eq:phi_nk}
			\psi_{n,k}(t):=\begin{cases}2&\text{if}\ \abs{t}\leq1,\\[5pt]\frac{2}{t^{n-1}}&\text{if}\ \abs{t}\geq1\end{cases}
		\end{equation}
		is an integrable majorant of $f_{n,k}(a,t)$ for all $a\in I_k$.
  We need to show that $\frac{\partial f_{n,k}(a,t)}{\partial a}$ is also continuous and it has an integrable majorant. In order to calculate the derivative of $f_{n,k}(a,t)$ with respect to $a$ on $I_k$ we will apply the  differentiation rules
	    \[(\sinc x)'=\frac{1}{x}(\cos x-\sinc x)\quad \text{and}\quad  b'=-\frac{kab}{1-ka^2}.\]
	    These yield that
    	\begin{align*}
    		\frac{\partial f_{n,k}(a,t)}{\partial a}&=
    		\frac{2a}{(1-a^2)^2}\cdot\sinc^{n-k}{(bt)}\cdot\sinc^{k-1}{(at)}\cdot\cos{(at)}+\\[5pt]&+
    		\frac{ka}{1-a^2}\cdot\sinc^{n-k-1}{(bt)}\cdot\frac{\sinc{(bt)}-\cos{(bt)}}{b^2}\cdot\sinc^{k-1}{(at)}\cdot\cos{(at)}+\\[5pt]&+
    		\frac{k-1}{1-a^2}\cdot\sinc^{n-k}{(bt)}\cdot\sinc^{k-2}{(at)}\cdot\frac{\cos{(at)}-\sinc{(at)}}{a}\cdot\cos{(at)}-\\[5pt]&-
    		\frac{1}{a(1-a^2)}\cdot\sinc^{n-k}{(bt)}\cdot\sinc^{k-2}{(at)}\cdot\sin^2{(at)}+\\[5pt]&+
    		\frac{2ka}{(n-k)(1-b^2)^2}\cdot\sinc^{n-k-1}{(bt)}\cdot\sinc^k{(at)}\cdot\cos{(bt)}-\\[5pt]&-
    		\frac{(n-k-1)ka}{(n-k)(1-b^2)}\cdot\sinc^{n-k-2}{(bt)}\cdot\frac{\sinc{(bt)}-\cos (bt)}{b^2}\cdot\sinc^k{(at)}\cdot\cos{(bt)}-\\[5pt]&-
    		\frac{k}{1-b^2}\cdot\sinc^{n-k-1}{(bt)}\cdot\sinc^{k-1}{(at)}\cdot\frac{\sinc{(at)}-\cos{(at)}}{a}\cdot\cos{(bt)}-\\[5pt]&-
    		\frac{k}{(n-k)(1-b^2)a}\cdot\sinc^{n-k-2}{(bt)}\cdot\sinc^{k-2}{(at)}\cdot\sin^2{(at)}.
    	\end{align*}
	    Clearly $\frac{\partial f_{n,k}(a,t)}{\partial a}$ is continuous on $I_k\times\R$ since	\[\lim_{a\to{1}/{\sqrt{k}}}\frac{\sinc{(bt)}-\cos{(bt)}}{b^2}=\frac{t^2}{3}.\]
	    When $4 \leq k \leq n-2$, for each term above an integrable majorant may be given similarly to \eqref{eq:phi_nk}, which yields that $\frac{\partial f_{n,k}(a,t)}{\partial a}$ has an integrable majorant on the whole $I_k$. However, for $k=2,3$, this property only holds on any compact subinterval of $I_k\setminus\big\{\frac1{\sqrt k}\big\} = \big [ \frac 1 {\sqrt{n}}, \frac 1 { \sqrt{k}} \big) $, since in these cases the term 
        \begin{gather*}
            \lim_{a\to1/\sqrt k}\frac{ka}{1-a^2}\cdot\sinc^{n-k-1}(bt)\cdot\frac{\sinc (bt)-\cos (bt)}{b^2}\cdot\sinc^{k-1}(at)\cdot\cos (at)=\\=
            \frac{k^{\frac32}}{3(k-1)}t^2\cdot\sinc^{k-1}\frac{t}{\sqrt k}\cdot\cos\frac{t}{\sqrt k}
        \end{gather*}
        is not integrable, neither is eliminated by the other summands.

      The Leibniz integral rule (see \cite[Theorem 3.2.]{Lang}) implies that $F_{n,k}(.)$ is differentiable on $I_k$ if $k\geq4$, and on every compact subinterval of $I_k\setminus\big\{\frac1{\sqrt k}\big\}$ if $k=2,3$. The derivative is given by
	    \begin{equation}\label{eq:Fn'}
	        F_{n,k}'(a)=\frac1\pi\cdot\frac{\partial{}}{\partial{a}}\int_{-\infty}^\infty f_{n,k}(a,t)\dd{t}=\frac1\pi\int_{-\infty}^\infty\frac{\partial f_{n,k}(a,t)}{\partial a}\dd{t}.
	    \end{equation}
	    Moreover $F_{n,k}'(.)$ is continuous from the right at $\frac{1}{\sqrt{n}}$ for every $k$ (see \cite[Theorem 3.1.]{Lang}). Accordingly,  the value of the right-hand derivative $F_{n,k}'\Big(\frac{1}{\sqrt{n}}\Big)$
	    may be evaluated by the substitution $a=\frac{1}{\sqrt{n}}$. Note that this leads to $b=a$, hence products of the $\sinc$ terms may be combined together. Thus each term of the integral $F_{n,k}'\big(\frac{1}{\sqrt{n}}\big)$ is of the form
        \[\frac1\pi\int_{-\infty}^\infty \alpha\sinc^m\frac{t}{\sqrt{n}}\cdot\cos{\frac{\beta}{\sqrt{n}}}\dd{t}=\alpha\sqrt{n}J_m(\beta)\]
        with some $\alpha,\beta\in\R$ and some positive integer $m$ (recall that $J_m(\beta)$ is defined by \eqref{eq:Jnr}). Therefore, from \eqref{eq:Fn'} we obtain that
        \begin{align*}
            F_{n,k}'\Big(\frac{1}{\sqrt{n}}\Big)&=
            \frac{(k-2)n^2}{2(n-1)}J_{n-2}(0)+
            \frac{k(n-k-2)n^2}{2(n-k)(n-1)}J_{n-2}(0)-
            \frac{kn^2}{n-1}J_{n-2}(0)+\\[5pt]&+
            \frac{n^2(n+1)}{(n-1)^2}J_{n-1}(1)-
            \frac{kn^2(n+1)}{(n-1)^2(n-k)}J_{n-1}(1)-\\[5pt]&-
            \frac{kn^2}{n-1}J_{n-2}(2)-
            \frac{kn^2}{2(n-1)}J_{n-2}(2)+
            \frac{kn^2}{2(n-1)}J_{n-2}(2).
        \end{align*}
	    This further simplifies to
    	\begin{align*}
    	    F_{n,k}'\Big(\frac{1}{\sqrt n}\Big)&=\frac{n^3}{(n-k)(n-1)}\Big(\frac{n+1}{n-1}J_{n-1}(1)-J_{n-2}(0)\Big) = \\ 
         &= \frac{n^3}{(n-k)(n-1)}\Big(\frac{n+1}{n}J_{n}(0)-J_{n-2}(0)\Big),
    	\end{align*}
     where \eqref{eq:Jn0} is used in the second step.     	Due to the estimate of Corollary~\ref{cor:ineq},  this is indeed negative for every $n\geq4$ and $2\leq k\leq n-2$.
\end{proof}

Next, we will apply a geometric argument in order to show the positivity of $F_{n,2}$ in a left neighborhood of $\frac 1 {\sqrt{2}}$. First we recall a standard useful volume formula. Let $H_{\mbf{u}}$ and $H_{\mbf{v}}$ be  hyperplanes with normal unit vectors $\mbf{u}$ and $\mbf{v}$, and denote by $\Proj{H_{\mbf{u}}}{.}$ the orthogonal projection onto $H_\mbf{u}$. Then 
	\begin{equation}\label{eq:proj}
	    \vol{n-1}{\Proj{H_{\mbf{u}}}{A}}=\abs{\sca{\mbf{u}}{\mbf{v}}}\cdot\vol{n-1}{A}
	\end{equation}
for any measurable set $A \subset H_{\mbf{v}}$.

\begin{proof}[Proof of Lemma~\ref{lem:3}] 
Throughout the proof, we set $k =2$. Accordingly,  \eqref{eq:b_nka} simplifies to 
\begin{equation}\label{eq:b_n2a}
    b = b_{n,2}(a) = \sqrt{\frac{1 - 2 a^2}{n-2}}\,.
\end{equation}
The constraint $a \geq \sqrt{\frac{n-2}{2n-3}}$ thus yields that
\begin{equation}\label{eq:abcons}
    \frac a b \geq n-2.
\end{equation}

According to \eqref{eq:F_nk} and \eqref{eq:s-int}
		\begin{equation}\label{eq:F=s-s}
			 F_{n,2}(a)=\frac{1}{(1-a^2)^{\frac32}}s\Big({\mbf{u}_1(a)},\frac a2\Big)-\frac{1}{(1-b^2)^{\frac32}}s\Big({\mbf{u}_2(a)},\frac b2\Big),
		\end{equation}
            where $\mbf u_1(a)=(a,b,\ldots,b)$ and $\mbf{u}_2(a)=(a,a,b,\ldots,b)$ are $(n-1)$-dimensional non-unit vectors obtained from $\mbf v_{n,2}(a)$ by deleting the first and third coordinate, respectively. Denote the above parallel section functions by
            \begin{equation}\label{eq:small-s1s2}
            s_1(a)=s\Big({\mbf{u}_1(a)},\frac a2\Big),\quad s_2(a)=s\Big({\mbf{u}_2(a)},\frac b2\Big)
            \end{equation}
            which, by \eqref{eq:hyperplane}, express the $(n-2)$-dimensional volume of the intersection of~$Q_{n-1}$ with the cross-sections
		\begin{equation}\label{eq:S1S2}
		S_1(a) =S\Big({\mbf{u}_1(a)},\frac a2\Big),\quad S_2(a) = S\Big({\mbf{u}_2(a)},\frac b2\Big).
		\end{equation}
		We may determine the functions $s_1(a)$, $s_2(a)$ exactly in an appropriate left neighbourhood of $a=\frac{1}{\sqrt2}$ by using suitably chosen orthogonal projections.

		We first study $s_1(a)$, the volume of the section $S_1(a)$. Consider the facet $F$ of $Q_{n-1}$ defined by
	     \[F=\Big\{\Big(\frac12,\mbf{y}\Big)\in Q_{n-1}:\ \mbf{y} \in Q_{n-2}\Big\}.
      \]
	   We will determine $\Proj{F}{S_1(a)}$, see Figure~\ref{fig:S1a}.
    \begin{figure}
        \centering
        \includegraphics[width=0.33\textwidth]{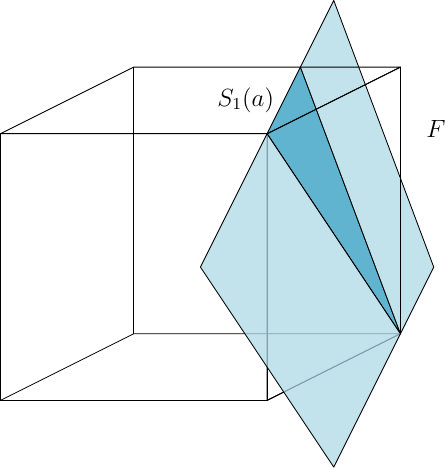}
        \caption{Section $S_1(a)$ and facet $F$ of $Q_{n-1}$.}
        \label{fig:S1a}
    \end{figure}
  
  Represent the points of $\R^{n-1}$ in the form of $(x, \mbf{y})$, where $x\in\R$ and $\mbf{y}\in\R^{n-2}$. Then, according to \eqref{eq:hyperplane} and \eqref{eq:S1S2}, the equation of the hyperplane in $\R^{n-1}$ corresponding to $S_1(a)$ can be written as
        \begin{equation*}\label{eq:S1-x}
            x=\frac12-\frac{b}{a}\sca{\mbf{y}}{\one{n-2}}.
        \end{equation*}
Note that on this hyperplane, $S_1(a)$ is described by the criteria $x \in [-\frac 12, \frac 12]$ and $\mbf{y} \in Q_{n-2}$. The first condition, by the equation above, transforms to  $0 \leq \sca{\mbf{y}}{\one{n-2}} \leq \frac{a}{b}$. Since  by \eqref{eq:abcons}, the upper limit is at least $n-2$, while 
$\max_{\mbf{y} \in Q_{n-2}}  \sca{\mbf{y}}{\one{n-2}} = \frac{n-2}{2}$,  the orthogonal projection of $S_1(a)$ to $F$ is specified by
	    \[\Proj{F}{S_1(a)}=\Big\{\Big(\frac12,\mbf{y}\Big)\in F:\ \sca{\mbf{y}}{\one{n-2}}\geq0\Big\}.\]
	    Note that this constitutes half of the facet $F$, hence
	    \begin{equation}\label{eq:Proj-F1}
	        \vol{n-2}{\Proj{F}{S_1(a)}}=\frac12\cdot\vol{n-2}{F}=\frac12.
	   \end{equation}
	    On the other hand, according to \eqref{eq:proj},
	    \begin{equation}\label{eq:vol-Proj-F1}
	        \vol{n-2}{\Proj{F}{S_1(a)}}=\abs{\sca{\frac{\mbf{u}_1(a)}{\abs{\mbf{u}_1(a)}}}{\mbf{e}_1}}\cdot\vol{n-2}{S_1(a)}=\frac{a}{\sqrt{1-a^2}}\cdot\vol{n-2}{S_1(a)},
	    \end{equation}
	    thus from equations {\eqref{eq:small-s1s2}}, {\eqref{eq:Proj-F1}} and {\eqref{eq:vol-Proj-F1}} we obtain that
	    \begin{equation}\label{eq:su1a}
	        s_1(a)=\frac{\sqrt{1-a^2}}{2a}.
	    \end{equation}

     Next, consider the quantity $s_2(a)$. Now represent the points of $\R^{n-1}$ in the form of $(\mbf{x}, \mbf{y})$, where $\mbf{x}\in\R^2$ and $\mbf{y}\in\R^{n-3}$. We will determine the orthogonal projection of  $S_2(a)$ onto the central diagonal section of $Q_{n-1}$ given as
	    \[D=\big\{(\mbf{x}, \mbf{y}) \in Q_{n-1}:\ \sca{\mbf{x}}{\one{2}}=0, \ \mbf{y} \in Q_{n-3}\big\}.\]
     
     The equation of the hyperplane corresponding to $S_2(a)$ is, by \eqref{eq:hyperplane} and \eqref{eq:S1S2}, 
	    \begin{equation}\label{eq:S2-x}
	        \sca{\mbf{x}}{\one{2}}=\frac{b}{a}\Big(\frac12-\sca{\mbf{y}}{\one{n-3}}\Big),
        \end{equation}
and $S_2(a)$ is described by $\mbf{x} \in Q_2$, $\mbf{y} \in Q_{n-3}$ in addition to the above equation.

Consider $\Proj{D}{S_2(a)}$.  We can calculate its volume  as follows. Let $L_1$ be the $2$-dimensional linear subspace of $\R^{n-1}$ which consists of vectors of the form $(\mbf{x}, \zero{n-3})$ with $\mbf{x} \in \R^2$. 		
Furthermore let $L_2=L_1^\perp$ in $\R^{n-1}$, an $(n-3)$-dimensional linear subspace. Then
	    \begin{equation}\label{eq:vol-Proj-F2-int}
	        \vol{n-2}{\Proj{D}{S_2(a)}}=\int_{L_2\cap Q_{n-1}}\vol{1}{S_2(a)\cap(\mbf{y}+L_1)}\dd{\mbf{y}},
	    \end{equation}
	    since during the projection of $S_2(a)$, the length of $S_2(a)\cap(\mbf{y}+ L_1)$ does not change, see Figure~\ref{fig:S2a}.
	    \begin{figure}[h]
	        \centering
	        \includegraphics[width=.9\textwidth]{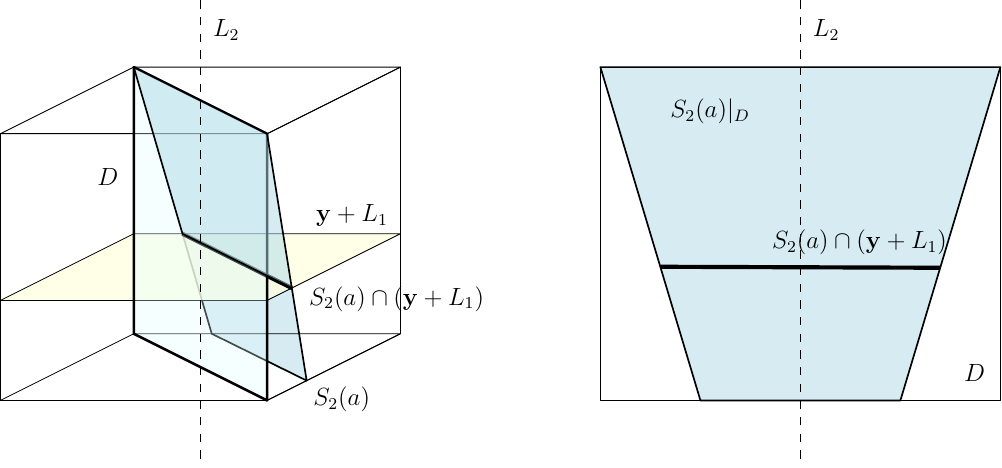}
	        \caption{Sections of $S_2(a)$ and $\mathbf{y}+L_1$ in $Q_{n-1}$ and its diagonal section $D$.}
	        \label{fig:S2a}
	    \end{figure}
	    
	    \noindent
	 Note that since by \eqref{eq:abcons}, $\frac b a \leq \frac{1}{n-2}$, \eqref{eq:S2-x} yields that for $n \geq 3$, $S_2(a)\cap(\mbf{y}+L_1)$ is non-empty for arbitrary  $\mbf{y} \in L_2 \cap Q_{n-1}$. Moreover, the length of $S_2(a)\cap(\mbf{y}+L_1)$ may be calculated from equation \eqref{eq:S2-x} using elementary plane geometry, resulting in 
     \[\vol{1}{S_2(a)\cap(\mbf{y}+L_1)}=\sqrt2-\sqrt2\cdot\frac{b}{a}\abs{\frac12-\sca{\mbf{y}}{\one{n-3}}}.\]

	    Substituting this back to \eqref{eq:vol-Proj-F2-int}, and considering the fact that ${L_2\cap Q_{n-1}=Q_{n-3}}$, we obtain that
	    \begin{equation}\label{eq:volds}
     \vol{n-2}{\Proj{D}{S_2(a)}}=\int_{Q_{n-3}}\Big(\sqrt2-\sqrt2\cdot\frac{b}{a}\cdot\abs{\frac12-\sca{\mbf{y}}{\one{n-3}}}\Big)\dd{\mbf{y}}.
        \end{equation}
     	    
            According to formula~\eqref{eq:proj}, 
	    \begin{equation*}\label{eq:Proj-F2}
	        \vol{n-2}{\Proj{D}{S_2(a)}}=\abs{\sca{\frac{\mbf{u}_2(a)}{\abs{\mbf{u}_2(a)}}}{\mbf{d}_{n-1,2}}}\cdot\vol{n-2}{S_2(a)}=\frac{\sqrt2a}{\sqrt{1-b^2}}\cdot\vol{n-2}{S_2(a)},
	    \end{equation*}
therefore, by \eqref{eq:volds},
	    \begin{equation}\label{eq:su2a}
	        s_2(a)=\frac{\sqrt{1-b^2}}{a}\cdot\bigg(1-\frac{b}{a}\cdot\int_{Q_{n-3}}\abs{\frac12-\sca{\mbf{y}}{\one{n-3}}}\dd{\mbf{y}}\bigg).
	    \end{equation}
	    Thus, by substituting \eqref{eq:su1a} and \eqref{eq:su2a} back to \eqref{eq:F=s-s}, we derive that
\begin{align*}
 F_{n,2}(a)&=\frac{1}{2a(1-a^2)}-\frac{1}{a(1-b^2)}\cdot\bigg(1-\frac{b}{a}\cdot\int_{Q_{n-3}}\abs{\frac12-\sca{\mbf{y}}{\one{n-3}}}\dd{\mbf{y}}\bigg) \geq  \\
 &\geq \frac{1}{2a(1-a^2)}-\frac{1}{a(1-b^2)}\cdot\bigg(1-\frac{b}{a}\cdot\int_{Q_{n-3}}\Big(\frac12-\sca{\mbf{y}}{\one{n-3}} \Big) \dd{\mbf{y} } \bigg) =  \\
 &=\frac{1}{2a(1-a^2)}-\frac{1}{a(1-b^2)}\cdot\Big(1-\frac{b}{a} \cdot \frac 1 2 \Big)=\\
 &= \frac{\sqrt{1 - 2 a^2}}{2 a^2 (1 - a^2) (n - 3 + 2 a^2)}\cdot \big(\sqrt{n - 2} (1 - a^2) - (n-1) a \sqrt{1 - 2 a^2}\big)
\end{align*}
by \eqref{eq:b_n2a}. Since $n \geq 4$ and $a \leq \frac{1}{\sqrt2} $, this is guaranteed to be non-negative if 
\[
(n - 2) \big(1 - a^2\big)^2 \geq (n-1)^2 a^2 \big(1 - 2 a^2\big)
%n(2n-3)a^4-(n^2-3)a^2+(n-2)\geq 0 
\]
which holds when $a^2 \leq \frac 1 n$ or $a^2 \geq \frac{n-2}{2n-3} = \gamma_n^2$. The latter condition implies that $F_{n,2}(a) \geq 0$ for every $a \in  \big[ {\gamma_n}, \frac{1}{\sqrt2} \big]$. 
\end{proof}

\section{On the question of local extremality}\label{sec:saddle}

Supporting Conjecture~\ref{conj:all-diag}, in the concluding section we demonstrate that for any $n \geq 4$, the critical direction constructed for the proof  of Theorem~\ref{thm:nondiag} is not locally extremal on $S^{n-1}$ with respect to the central section function $\sigma(\mbf{v})$ on $S^{d-1}$. Magically, this will also prove to be a consequence of Corollary~\ref{cor:ineq}.

More precisely, let $\xi_n$ be the location of the first (and, as conjectured in Section~\ref{sec:exist-non-diag}, only) local minimum of $\widehat{\sigma}(a)$ defined by \eqref{eq:sigma-hat} on the interval $\big( \frac 1 {\sqrt{n}}, \frac 1 {\sqrt{2}}\big)$. According to the argument at the beginning of  Section~\ref{sec:exist-non-diag}, $\mbf w = \mbf v_{n,2}(\xi_n)$ is a non-diagonal critical direction, in fact, this is a zero  of $F_{n,2}(.)$ defined by \eqref{eq:F_nk}.

We will prove that $\mbf w$ is not locally extremal. By the choice of $\mbf w$, we only have to exclude the possibility of $\mbf w$ being a local minimum. According to \cite[Proposition 3.2.1]{Bertsekas}, it suffices to demonstrate that the Hessian $\widetilde H$ of the Lagrange function \eqref{eq:lagrangefunction} at $\mbf w$ is not positive definite. To that end, let $\mbf q=(1,-1,0,\ldots,0)$ and consider the value $\mbf q \widetilde H\mbf q ^T$. 
The proof of its negativity, by \eqref{eq:lagrangempl}, \eqref{eq:borderedHessian}, \eqref{eq:betavdef} and \eqref{eq:gammavdef}, amounts to showing that
 \begin{equation}\label{eq:saddle-Hesse}
            \mbf q\cdot
            \begin{bNiceMatrix}
            \beta_1(\mbf w)&\gamma_{1,2}(\mbf w)&\Cdots&\gamma_{1,n}(\mbf w)\\
            \gamma_{2,1}(\mbf w)&\beta_2(\mbf w)&\Cdots&\gamma_{2,n}(\mbf w)\\
            \Vdots&\Vdots&\Ddots&\Vdots\\
            \gamma_{n,1}(\mbf w)&\gamma_{n,2}(\mbf w)&\Cdots&\beta_{n}(\mbf w)
            \end{bNiceMatrix}
            \cdot\mbf q^T<0,
        \end{equation}
    equivalently, since $\beta_1(\mbf w)=\beta_2(\mbf w)$ and $\gamma_{1,2}(\mbf w)=\gamma_{2,1}(\mbf w)$,
    \begin{equation}\label{eq:beta-gamma}
        \beta_1(\mbf w)+\beta_2(\mbf w)-\gamma_{1,2}(\mbf w)-\gamma_{2,1}(\mbf w)=2\big(\beta_1(\mbf w)-\gamma_{1,2}(\mbf w)\big)<0.
    \end{equation}
    Based on formulae \eqref{eq:betavdef} and \eqref{eq:gammavdef},
    \begin{gather*}
        \beta_1(\mbf w)=\frac1\pi\int_{-\infty}^\infty\sinc^{n-2}(\eta_n t)\cdot\sinc(\xi_nt)\cdot\Big(\frac2{\xi_n^2}(\sinc(\xi_nt)-\cos(\xi_nt))-t^2\sinc(\xi_nt)+\sinc(\xi_nt)\Big)\dd t\\
        \gamma_{1,2}(\mbf w)=\frac1\pi\int_{-\infty}^\infty\sinc^{n-2}(\eta_nt)\cdot\Big(\frac{\cos(\xi_nt)-\sinc(\xi_nt)}{\xi_n}\Big)^2\dd t,
    \end{gather*}
    where 
    \[\eta_n=\sqrt{\frac{1-2\xi_n^2}{n-2}}.\]
    Hence on the left hand side of \eqref{eq:beta-gamma}, 
    \begin{align}\label{eq:int-xi}    
    \begin{split}
    %&\frac1{\pi\xi_n^2}\int_{-\infty}^\infty\sinc^{n-2}(\eta_nt)\cdot\Big((2+\xi_n^2)\sinc^2(\xi_nt)-2\sinc\xi_nt\cdot\cos(\xi_nt)-\sin^2(\xi_nt)-(\cos(\xi_nt)-\sinc(\xi_nt))^2\Big)\dd t=\\
    &\beta_1(\mbf w)-\gamma_{1,2}(\mbf w)=\\
    &=\frac{1}{\pi \xi_n^2}\int_{-\infty}^\infty\sinc^{n-2}(\eta_n t)\cdot\Big((1+\xi_n^2)\sinc^2(\xi_nt)-1\Big)\dd t = \\
    &=\frac{1}{\xi_n^2}\Big((1+\xi_n^2)\sigma(\mbf w) - \sqrt{\frac{1}{{1-2\xi_n^2}}} \sigma(\mbf{d}_{n-2}) \Big).
    \end{split}
    \end{align}
By the choice of $\xi_n$, Theorem~\ref{thm:localmax} implies that $\sigma(\mbf w) < \sigma(\mbf d_n)$. Hence \eqref{eq:int-xi} is bounded from above by 
\begin{align*}
&\frac{1}{\xi_n^2}\Big((1+\xi_n^2)\sigma(\mbf w) - \sqrt{\frac{1}{{1-2\xi_n^2}}} \sigma(\mbf{d}_{n-2}) \Big) < \\
&<\frac{1}{\xi_n^2}\Big((1+\xi_n^2)\sigma(\mbf{d}_{n}) - \sqrt{\frac{1}{{1-2\xi_n^2}}} \sigma(\mbf{d}_{n-2}) \Big)  = \\
&=\frac{1}{\xi_n^2}\Big((1+\xi_n^2)\sqrt n J_n(0)-\sqrt{\frac{n-2}{{1-2\xi_n^2}}}J_{n-2}(0)\Big),
\end{align*}
where \eqref{eq:sigmaJn} is used in the last step. Due to Corollary~\ref{cor:ineq}, this is bounded from above by 
    \begin{align}\label{eq:saddle-upper}
    \begin{split}
        &\frac{1}{\xi_n^2}\bigg((1+\xi_n^2)\sqrt n\cdot\frac{n}{n+1}-\sqrt{\frac{n-2}{1-2\xi_n^2}}\bigg)J_{n-2}(0)=\\
&=         \frac{1}{\xi_n^2} \cdot \frac 1 {\sqrt{{1-2\xi_n^2}}} \cdot \frac{n^{3/2}}{n+1} \cdot J_{n-2}(0) \cdot  \bigg( \sqrt{1-2\xi_n^2} \cdot (1+\xi_n^2) - \frac{\sqrt{n-2}\cdot (n+1)}{n^{3/2}}  \bigg).
    \end{split}
    \end{align}
    Note that since $\xi_n < \frac 1 {\sqrt{2}}$, the first terms are positive, while the function $\sqrt{1-2\xi^2}\cdot(1+\xi^2)$ is strictly monotone decreasing on the interval $\xi\in\big[\frac1{\sqrt n},\frac1{\sqrt2}\big]$. Hence, \eqref{eq:saddle-upper} is maximal  at $\xi = \frac{1}{\sqrt n}$, where its value is precisely 0. As $\xi_n > \frac 1 {\sqrt{n}}$, this implies that \eqref{eq:saddle-Hesse} indeed holds.

\section{Acknowledgements}
We are very grateful to F. Fodor for his help and advice during the preparation of the article, to H. König for suspecting the existence of non-diagonal critical sections, to L. Pournin  for informing us about related works and for stimulating discussions, and to the anonymous referee whose suggestions improved the article greatly.

 \bibliographystyle{amsplain}
        \bibliography{references}

\vspace{1.5 cm}
\noindent
{\sc Gergely Ambrus}
\smallskip

\noindent
{\em Bolyai Institute, University of Szeged, Hungary, \\ and Alfréd Rényi Institute of Mathematics,  Budapest, Hungary}

\noindent
e-mail address: \texttt{ambrus@renyi.hu}

\bigskip

\noindent
{\sc Barnabás Gárgyán}
\smallskip

\noindent
{\em Bolyai Institute, University of Szeged, Hungary}

\noindent
e-mail address: \texttt{gargyan.barnabas@gmail.com}

\end{document}